\documentclass[article,onefignum,onetabnum]{siamart171218}
\usepackage{amsfonts}
\usepackage{amsmath}
\usepackage{amssymb}
\usepackage{mathrsfs}
\usepackage{graphicx}
\usepackage{placeins}
\usepackage{comment}
\usepackage{MnSymbol}
\usepackage{enumerate}
\usepackage{xfrac}
\usepackage[utf8]{inputenc}

% Add a serial/Oxford comma by default.

% My commands

\newcommand{\e}{\varepsilon}
\newcommand{\R}{\mathbb R}
\newcommand{\mT}{\mathbb T}

\newcommand{\beq}{\begin{equation}}
\newcommand{\eeq}{\end{equation}}

\newcommand{\dive}{\mathrm{div}\,}

\newcommand{\mres}{\hspace{-.75mm}\lefthalfcup \hspace{-.75mm}}

% Used for creating new theorem and remark environments
\newsiamremark{remark}{Remark}
\newsiamremark{hypothesis}{Hypothesis}
\crefname{hypothesis}{Hypothesis}{Hypotheses}
\newsiamthm{claim}{Claim}
%\newsiamthm{proposition}{Proposition}

\overfullrule=0pt %suppresses black box at the end of overfull lines

% Sets running headers as well as PDF title and authors
\headers{A smectic liquid crystal model in the periodic setting}{Novack, Yan}

% Title. If the supplement option is on, then "Supplementary Material"
% is automatically inserted before the title.
\title{A smectic liquid crystal model in the periodic setting}

% Authors: full names plus addresses.
\author{Michael Novack\thanks{Department of Mathematics, The University of Texas at Austin, Austin, TX, USA 
  (\email{michael.novack@austin.utexas.edu}).}
\and Xiaodong Yan\thanks{Department of Mathematics, The University of Connecticut, Storrs, CT, USA
  (\email{xiaodong.yan@uconn.edu}).}
}

\begin{document}

\maketitle

% REQUIRED
\begin{abstract}
We consider the asymptotic behavior as $\e $ goes to zero of the 2D smectics model in the periodic setting given by %
\begin{equation*}
\mathcal{E}_{\e }( w) =\frac{1}{2}\int_{\mathbb{T}^{2}}\frac{1}{%
\e }\left( \left\vert \partial _{1}\right\vert ^{-1}\left( \partial
_{2}w-\partial _{1}\frac{1}{2}w^{2}\right) \right) ^{2}+\e \left(
\partial _{1}w\right) ^{2}dx .
\end{equation*}%
We show that the energy $\mathcal{E}_\e(w)$ controls suitable $L^p$ and Besov norms of $w$ and use this to demonstrate the existence of minimizers for $\mathcal{E}_\e(w)$, which has not been proved for this smectics model before, and compactness in $L^p$ for an energy-bounded sequence. We also prove an asymptotic lower bound for $\mathcal{E}_\e(w)$ as $\e \to 0$ by means of an entropy argument.
%For $\e_{n}\rightarrow 0$ and a sequence $\left\{ w_{n}\right\} $ with bounded energies $\mathcal{E}_{\e_{n}}\left( w_{n}\right)\leq C ,$ we prove relative compactness of $w_{n}$ in $L^{2}$ .  In addition, we derive asymptotic lower bound of $\mathcal{E}_{\e}$ as $\e$ goes to zero and show that such lower bound is sharp by constructing  a 1D ansatz which achieves matchng upper bound asymptotically. 
\end{abstract}

% REQUIRED
%\begin{keywords}
%. . .
%\end{keywords}

\section{Introduction}\label{sec:intro}

We consider the variational model 
\begin{equation}
\mathcal{E}_{\e }( w) =\frac{1}{2}\int_{\mathbb{T}^{2}}\frac{1}{%
\e }\left( \left\vert \partial _{1}\right\vert ^{-1}\left( \partial
_{2}w-\partial _{1}\frac{1}{2}w^{2}\right) \right) ^{2}+\e \left(
\partial _{1}w\right) ^{2}dx\,,  \label{periodicenergy}
\end{equation}%
where $w:\mathbb{T}^{2}\rightarrow \mathbb{R}$ is a periodic function with
vanishing mean in $x_{1}$, that is 
\begin{equation}\label{vanishing mean}
\int_{0}^{1}w(x_1,x_2)\,dx_{1}=0\quad\textup{for any $x_2 \in [0,1)$}\,.
\footnotemark
\end{equation}\footnotetext{More {\color{black}generally}, a periodic distribution $f$ on $\mathbb{T}^2$ has ``vanishing mean in $x_1$'' if for all $(k_1,k_2)=k\in \left( 2\pi \mathbb{Z}\right) ^{2}$ with $k_1=0$, $\widehat{f}(k)=0$. If $f$ corresponds to an $L^p$ function, $p\in [1,\infty)$, this is equivalent to the existence of a sequence $\{\varphi_k\}$ of smooth, periodic functions satisfying  \eqref{vanishing mean} that converges in $L^p$ to $f$.}
Here $\left\vert
\partial _{1}\right\vert ^{-1}$ is defined via its Fourier coefficients
\[
\widehat{\left\vert \partial _{1}\right\vert ^{-1}f}\left( k\right)
=\left\vert k_{1}\right\vert ^{-1}\widehat{f}\left( k\right) \text{ \ for }%
k\in {\left( 2\pi \mathbb{Z}\right) ^{2}}\,,
\]
and is well defined when \eqref{vanishing mean} holds.

This  model is motivated by a nonlinear approximate model of smectic liquid
crystals. The following
functional has been proposed as an approximate model for smectic liquid
crystals \cite{BreMar99, IL99, NY1, San06, SanKam03} in two space dimensions:
\begin{equation}
E_{\e }(u)=\frac{1}{2}\int_{\Omega }\frac{1}{\e }\left( \partial_2 u-\frac{1%
}{2}(\partial_1 u)^{2}\right) ^{2}+\e (\partial_{11} u)^{2}\,dx,  \label{smecticenergy}
\end{equation}%
where $u$ is the Eulerian deviation from the ground state $\Phi(x) = x_2$ and $%
\e $ is the characteristic length scale. The first term
represents the compression energy and the second term represents the bending energy. For further background on the model, we refer to \cite{NY1,NY2} and the references contained therein. The 3D version of \eqref{smecticenergy}{\color{black}, which we analyzed in \cite{NY2} but do not consider further here,} is also used for example in the mathematical description of nuclear pasta in neutron stars \cite{CSH18}.
Assuming that $u$ is periodic on the torus $\mathbb{T}^{2}=\Omega$ and setting $w=\partial_1 u, $ (%
\ref{smecticenergy}) becomes
\[
E_{\e }(u)=\frac{1}{2}\int_{\mathbb{T}^{2}}\frac{1}{\e}\left(
|\partial _{1}|^{-1}\left( \partial _{2}w-\partial _{1}\frac{1}{2}w^{2}\right)
\right) ^{2}+\e \left( \partial _{1}w\right) ^{2}dx.
\]%
{\color{black}Finally, a similar model to \eqref{periodicenergy} with $|\partial_1|^{-1/2}$ replacing $|\partial_1|^{-1}$ has been derived in the context of micromagnetics \cite{IgnOtt19}; see also \cite{Ste11}}.

The asymptotic behavior of  \eqref{smecticenergy} as $\e$ goes to zero was studied in \cite{NY1}.  Given $\varepsilon _{n}\rightarrow 0$ and a sequence $\left\{
u_{n}\right\} $ with bounded energies $E_{\varepsilon _{n}}(
u_{n})$, the authors proved  pre-compactness of {$\{\partial_1 u_n\}$ in $L^q$ for any $1\leq q<p$ and pre-compactness of $\{\partial_2 u_n\}$  in $L^{2}$} under the additional assumption $\| \partial_1 u_{n}\| _{L^{\color{black}p }}\leq
C $ {  for some $p >6$}. 
The compactness proof in \cite{NY1} uses a compensated compactness argument based on entropies, following the work of Tartar 
\cite{Tar79, Tar83, Tar05} and Murat \cite{Mur78, Mur81ASNP, Mur81JMPA}.  In addition, a lower bound on $E_{\e}$ and a matching upper bound corresponding to a 1D ansatz was obtained as $\e \rightarrow 0$ under the assumption that the limiting function $u$ satisfies $\nabla u \in (L^\infty \cap BV)(\Omega)$.

In this paper, we approach the compactness via a different argument in the periodic setting. Our proof is motivated by recent work on related variational models in the periodic setting \cite{C-AOttSte07,GolJosOtt15, IORT20,  Ott09JFA, OttSte10,  DabJamVen20} where strong convergence of a weakly convergent $L^2$ sequence is proved via estimates on Fourier series. Given a sequence $u_{\e}$ weakly converging in $L^2(\mT^2)$, to prove strong convergence of $u_{\e}$ in $L^2$, it is sufficient to show that  there is no concentration in the high frequencies.  The center piece of this approach relies on the estimates for solutions to Burgers equation 
$$
-\partial_1\frac{1}{2}w^2+\partial_2 w=\eta%_w
$$
in suitable  Besov spaces. This type of compactness argument also applies to a sequence $\{w_n\}$ with $\mathcal{E}_{\e}(w_n)\leq C$ for any fixed $\e$. As a direct corollary, we obtain the existence of minimizers of $E_{\e}$ in $W^{1,2}(\mT^2)$ (see Corollary  \ref{cor:exis}) for any fixed $\e$. We observe that to the best of our knowledge, the existence of minimizers of $E_{\e}$ in any setting was not known due to the lack of compactness for sequence $\{u_n\}$ satisfying $E_{\e}(u_n) \leq C$ with fixed $\e$.

To further understand the minimization of $\mathcal{E}_\e$, we are also interested in a sharp lower bound for the asymptotic limit of $\mathcal{E}_\e$ as $\e $ approaches zero. In the literature for such problems (see for example \cite{AmbDeLMan99, AviGig99,  IgnMer12, JinKoh00}), one useful technique in achieving such a bound is an ``entropy'' argument, in which the entropy production $\int \dive \Sigma(w)$ of a vector field $\Sigma(w)$ is used to bound the energy $\mathcal{E}_\e$ from below. %Such results can follow directly from our previous work in \cite{NY1}.
For the 2D Aviles-Giga functional
\begin{equation}
\frac{1}{2}\int_\Omega \frac{1}{\e}(|\nabla u|^2 - 1)^2 + \e | \nabla^2 u|^2 \,dx\,,
\end{equation}
such vector fields were introduced in \cite{JinKoh00,DKMO01}. In \cite{NY1, NY2}, the analogue for the smectic energy, in 2D and 3D respectively, of the Jin-Kohn entropies from \cite{JinKoh00} were used to prove a sharp lower bound which can be matched by a construction similar to \cite{ ConDeL07,Pol07}. In this paper, we use the vector field
\begin{equation}\label{2dSigma}
\Sigma(w) = \left(-\frac{1}{3}w^3, \frac{1}{2}w^2 \right)
\end{equation}
which is $(-(\partial_1 u)^3/3, (\partial_1 u)^2/2)$ in terms of $u$, to prove a sharp lower bound. As $\e\to 0$, entropy production concentrates along curves and approximates the total variation of the distributional divergence of a BV vector field. An interesting open direction which motivates studying \eqref{2dSigma} is utilizing the correct version of \eqref{2dSigma} (or the entropies from \cite{DKMO01,GhiraldinLamy}) in 3D, for example in a compactness argument.
%Let
%$
%u(x_1,x_2)=\int_0^{x_1}m(s,x_2)ds,
%$
%since $\int_0^1 m(s, x_2) ds =0$, $u$ is a periodic function and $\mathcal {E}_{\e}(m)=E_{\e}(u)$. Thus  one can derive asymptotic lower bound and construct matching upper bound by a 1D ansatz  following the same approach as in \cite{NY1}. 

%In this paper, we introduce an alternate approach to derive the lower bound.  We  bound $\mathcal{E}_{\e}$  from below by the integral of the  divergence of a vector field $\Sigma(w)$ (c.f. section \ref{sec:lbd}).  In the limit, the integral of the divergence converges to the total variation of a BV function. This type of argument is in the similar spirit of our lower bound estimate for $E_{\e}$ in \cite{NY1}, though  the step  finding $\Sigma(w)$ is different from that in \cite{NY1}. 

The paper is organized as follows. The pre-compactness of a sequence of functions with bounded energy is proved in Section \ref{sec:cpt}, for both fixed $\varepsilon$ and $\varepsilon \to 0$. The lower bound is established in Section \ref{sec:lbd}.

\section{Compactness of a sequence with bounded energy} \label{sec:cpt}
\subsection{Preliminaries}
\label{sec:prelim}

Let $\mathbf{e}_{1}=\left( 1,0\right) $ and $\mathbf{e}_{2}=\left( 0,1\right) $
be unit vectors in $\mathbb{R}^{2}.$ We recall some definitions from \cite%
{IORT20}. For $f:\mathbb{T}^{2}\rightarrow \mathbb{R}$, we write 
\[
\partial _{j}^{h}f\left( x\right) =f\left( x+h\mathbf{e}_{j}\right) -f\left(
x\right) \text{ \ \ \ \ \ }x\in \mathbb{T}^{2},\text{ }h\in \mathbb{R}\text{%
. }
\]

\begin{definition}
Given $f:\mathbb{T}^{2}\rightarrow \mathbb{R}$, $j\in \{1,2\},$ $s\in \left(
0,1\right] ,$ and $p\in \lbrack 1,\infty )$, the directional Besov seminorm is
defined as 
\[
\left\Vert f\right\Vert _{\overset{\cdot }{\mathcal{B}}_{p;j}^{s}}=\sup_{h%
\in \left( 0,1\right] }\frac{1}{h^{s}}\left( \int_{\mathbb{T}^{2}}\left\vert
\partial _{j}^{h}f\right\vert ^{p}dx\right) ^{\frac{1}{p}}
\]
\end{definition}

\begin{remark}
This is the $\mathcal{B}^{s;p,\infty }$ seminorm defined in each direction
separately. 
\end{remark}

\begin{remark}\label{IORT remark}
For $p=2$ and $s\in \left( 0,1\right) ,$ given $s^{\prime }\in \left(
s,1\right) ,$ the following inequality holds $\left( \cite[\textup{Equation }(2.2)]{IORT20}\right) $:%
\[
\int_{\mathbb{T}^{2}}\left\vert \left\vert \partial _{j}\right\vert
^{s}f\right\vert ^{2}=\dsum \left\vert k_{j}\right\vert ^{2s}\left\vert 
\widehat{f}\left( k\right) \right\vert ^{2}=c_{s}\int_{\mathbb{R}}\frac{1}{%
\left\vert h\right\vert ^{2s}}\int_{\mathbb{T}^{2}}\left\vert \partial
_{j}^{h}f\right\vert ^{2}dx\frac{dh}{\left\vert h\right\vert }\leq C(
s,s^{\prime }) \left\Vert f\right\Vert^{2}_{\color{black}\overset{\cdot }{\mathcal{B}}%
_{2;j}^{s'}}.
\]
\end{remark}

We quote two {\color{black}results }from \cite{IORT20}.

\begin{lemma}
\label{iortb9}\cite[Proposition B.9]{IORT20} For every $%
p\in \left( 1,\infty \right] $ and $q\in \left[ 1,p\right] $ with $\left(
p,q\right) \neq \left( \infty ,1\right) ,$ there exists a constant $C(
p,q) >0$ such that for every periodic function $f:\left[ 0,1\right)
\rightarrow \mathbb{R}$ with vanishing mean,%
\begin{equation}
\left( \int_{0}^{1}\left\vert f\left( z\right) \right\vert ^{p}dz\right) ^{%
\frac{1}{p}}\leq C( p,q) \int_{0}^{1}\frac{1}{h^{\frac{1}{q}-%
\frac{1}{p}}}\left( \int_{0}^{1}\left\vert \partial _{1}^{h}f\left( z\right)
\right\vert ^{q}dz\right) ^{\frac{1}{q}}\frac{dh}{h}\,, \label{eqn:pqbd}
\end{equation}
with the usual interpretation for $p=\infty$ or $q=\infty$.
\end{lemma}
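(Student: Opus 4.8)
This estimate is the one-dimensional Sobolev--Besov embedding $\dot{B}^{s}_{q,1}([0,1))\hookrightarrow L^p([0,1))$ at the critical smoothness $s=\tfrac1q-\tfrac1p$, phrased through the finite-difference characterization of the Besov seminorm. Indeed, for $s\in(0,1)$ the right-hand side is comparable to $\sum_{j\ge0}2^{js}\|\partial_1^{2^{-j}}f\|_{L^q([0,1))}$, and the admissible range $q\le p$, $(p,q)\ne(\infty,1)$ corresponds exactly to $s=\tfrac1q-\tfrac1p\in[0,1)$, the excluded pair being the endpoint $s=1$ where first differences no longer characterize the space. The plan is to split $f$ into pieces localized at dyadic scales $2^{-j}$, bound each piece in $L^q$ by finite differences, upgrade the integrability from $L^q$ to $L^p$ by a Bernstein-type inequality, and then resum.

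First I would introduce the averages $S_h f(z)=\tfrac1h\int_0^h f(z+t)\,dt$ and the dyadic scales $h_j=2^{-j}$. The vanishing-mean hypothesis together with periodicity gives $S_{1}f\equiv0$, while $S_{h_j}f\to f$, so the telescoping identity $f=\sum_{j\ge0}D_j$ holds with $D_j:=S_{h_{j+1}}f-S_{h_j}f$. Writing $S_hf-f=\tfrac1h\int_0^h\partial_1^tf\,dt$ and applying Minkowski's integral inequality controls each block by differences at comparable scales: $\|D_j\|_{L^q}\le C\,2^{j}\int_0^{2^{-j}}\|\partial_1^tf\|_{L^q}\,dt$. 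Granting the Bernstein estimate $\|D_j\|_{L^p}\le C\,2^{js}\|D_j\|_{L^q}$ with $s=\tfrac1q-\tfrac1p$, the triangle inequality gives $\|f\|_{L^p}\le\sum_j\|D_j\|_{L^p}\le C\sum_j 2^{j(s+1)}\int_0^{2^{-j}}\|\partial_1^tf\|_{L^q}\,dt$. It then remains to exchange the sum and the integral by Fubini and sum the resulting geometric series (convergent since $s+1>0$), which collapses the dyadic sum into the stated integral $\int_0^1 h^{-s}\|\partial_1^hf\|_{L^q}\,\tfrac{dh}{h}$; this conversion is routine.

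The \emph{main obstacle} is the Bernstein inequality $\|D_j\|_{L^p}\le C\,2^{js}\|D_j\|_{L^q}$, the genuine gain in integrability at cost $2^{js}$. This fails for arbitrary functions and relies on $D_j$ being concentrated at frequencies of order $2^{j}$, so the scale localization of the decomposition must be exploited. With the raw box average the kernel defining $D_j$ has a slowly decaying Fourier transform and $D_j$ is not cleanly band-limited, so I would instead run the decomposition with a smooth mean-one kernel (or an honest Littlewood--Paley projection), for which $\widehat{D_j}$ is supported in, or rapidly concentrated near, the band $|\xi|\sim 2^{j}$. For such $D_j$ one has a reproducing identity $D_j=D_j*\Theta_j$ with $\Theta_j$ a fixed profile rescaled to scale $2^{-j}$, and Young's convolution inequality gives $\|D_j\|_{L^p}=\|D_j*\Theta_j\|_{L^p}\le\|D_j\|_{L^q}\|\Theta_j\|_{L^r}$ with $\tfrac1r=1+\tfrac1p-\tfrac1q$ and $\|\Theta_j\|_{L^r}=C\,2^{js}$, which is exactly the claimed gain. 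One must then check that switching to the smooth kernel does not spoil the finite-difference bound on $\|D_j\|_{L^q}$, which it does not, since a smooth mean-one kernel is again an average of translates of $f$. The endpoint $s=1$, i.e. $(p,q)=(\infty,1)$, is excluded precisely because there the first-difference right-hand side degenerates (it is infinite for every non-constant absolutely continuous $f$), so the estimate conveys no information and falls outside this scheme.
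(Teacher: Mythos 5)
The paper never proves this lemma at all: it is imported verbatim from \cite[Proposition B.9]{IORT20} and used as a black box, so there is no internal argument to measure your proof against. Judged on its own, your argument is the standard Littlewood--Paley route to the critical embedding $\dot{\mathcal{B}}^{s}_{q,1}\hookrightarrow L^{p}$ with $s=\frac{1}{q}-\frac{1}{p}$, and it is essentially correct. The skeleton --- telescoping $f=\sum_{j}D_{j}$ with the zeroth block annihilated by the vanishing-mean hypothesis, the block bound $\|D_{j}\|_{L^{q}}\le C\,2^{j}\int_{0}^{2^{-j}}\|\partial_{1}^{t}f\|_{L^{q}}\,dt$, the Bernstein gain $\|D_{j}\|_{L^{p}}\le C\,2^{js}\|D_{j}\|_{L^{q}}$, and the exchange of sum and integral with a geometric series (convergent since $s+1>0$) collapsing into $\int_{0}^{1}h^{-s}\|\partial_{1}^{h}f\|_{L^{q}}\,\frac{dh}{h}$ --- is sound, and you correctly isolate the one genuine issue (box-average blocks are not band-limited, so the reproducing identity behind Bernstein fails) together with its standard repair: take $\widehat{\theta}$ compactly supported in $\{|\xi|<2\pi\}$ with $\widehat{\theta}(0)=1$, so that on the torus the vanishing-mean condition gives $f*\theta_{0}\equiv 0$ and each $D_{j}$ is a genuine trigonometric polynomial with frequencies of size $2^{j}$. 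One point you pass over too quickly: a kernel whose Fourier transform is compactly supported cannot itself be compactly supported in space, so the finite-difference bound on $\|D_{j}\|_{L^{q}}$ acquires tail contributions from translates at all scales $2^{m-j}$, $m\ge 0$, weighted by the rapid decay $(1+2^{j}|y|)^{-N}$ of the kernel; these tails must be carried through the resummation, where they are harmless because $2^{-Nm}$ beats the loss $2^{(s+1)m}$ once $N>s+1$, but the claim that a smooth mean-one kernel ``is again an average of translates'' at scale $2^{-j}$ alone is not literally true. Finally, note that your block estimate builds in exactly the ``average over scales $\le h$'' device that the paper separately quotes as \eqref{eqn:avebd} from \cite[Lemma B.10]{IORT20}; this is what lets your right-hand side involve $\|\partial_{1}^{h}f\|_{L^{q}}$ itself rather than a monotone modulus of continuity, and is the reason the non-monotonicity of $h\mapsto\|\partial_{1}^{h}f\|_{L^{q}}$ causes no trouble.
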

{\color{black}The following estimate was derived in the proof of Lemma B.10 in \cite{IORT20}.
\begin{lemma}
\cite[In the proof of Lemma B.10]{IORT20} For every  $p\in \left[ 1,\infty \right) $ and
every periodic function $f:\left[ 0,1\right) \rightarrow \mathbb{R}$, $h \in (0,1]$, the following estimate holds. 
\begin{equation}
\left( \int_{0}^{1}\left\vert
\partial _{1}^{h}f\left( z\right) \right\vert ^{p}dz\right) ^{\frac{1}{p}%
}\leq 2\left( \frac{1}{h}%
\int_{0}^{h}\int_{0}^{1}\left\vert \partial _{1}^{h^{\prime }}f\left(
z\right) \right\vert ^{p}dz\,dh^{\prime }\right) ^{\frac{1}{p}}. \label{eqn:avebd}
\end{equation}
\end{lemma}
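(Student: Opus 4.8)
The plan is to rewrite the single difference $\partial_1^h f$ as an \emph{average} of finite differences whose increments range over $(0,h)$, and then to combine Jensen's inequality with the translation invariance of the periodic $L^p$ norm. The one nontrivial input is the right algebraic identity; after that the argument is mechanical.

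First I would record the elementary pointwise identity, valid for a.e.\ $z$,
\[
\partial_1^h f(z) = \frac{1}{h}\int_0^h \bigl(f(z+h) - f(z+h')\bigr)\,dh' + \frac{1}{h}\int_0^h \bigl(f(z+h') - f(z)\bigr)\,dh'.
\]
This holds simply because the two averages of $f(z+h')$ over $h'\in(0,h)$ cancel, leaving $f(z+h)-f(z)$. The point of the splitting is that each integrand is again a finite difference of $f$ with increment in $(0,h)$: the second integrand is exactly $\partial_1^{h'}f(z)$, while the first is $\partial_1^{h-h'}f(z+h')$, as one sees by writing $f(z+h)=f\bigl((z+h')+(h-h')\bigr)$.

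Next I would estimate the $L^p$ norm in $z$ of the two averages separately. Viewing each as an average over $h'$ against the normalized measure $dh'/h$ on $(0,h)$ and using convexity of $t\mapsto|t|^p$, Jensen's inequality gives the pointwise bound for the $p$-th power; integrating in $z$ and applying Fubini yields, for the second term, precisely $\frac{1}{h}\int_0^h\int_0^1|\partial_1^{h'}f|^p\,dz\,dh'$, which is the bracket on the right-hand side of \eqref{eqn:avebd}. For the first term the same Jensen step produces $\frac{1}{h}\int_0^h\int_0^1|\partial_1^{h-h'}f(z+h')|^p\,dz\,dh'$; here the translation invariance of the integral over a period removes the shift $z\mapsto z+h'$, and the substitution $h''=h-h'$, which maps $(0,h)$ onto itself, identifies this with the identical quantity. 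The triangle inequality then combines the two equal bounds and produces the factor of $2$.

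The single idea is the averaging identity in the first step; everything afterward (Jensen, Fubini, periodic translation invariance, and the change of variables $h''=h-h'$) is routine and requires no more than $f\in L^p$. The main point to be careful about is to orient both differences so that the increments $h'$ and $h-h'$ stay in $(0,h)\subset(0,1]$, matching the range of integration on the right-hand side, and to verify that translation invariance is exactly what converts the shifted difference in the first term into an unshifted one.
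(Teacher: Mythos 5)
Your proof is correct. Note that the paper itself contains no proof of this lemma---it is quoted directly from \cite[proof of Lemma B.10]{IORT20}---and your argument is essentially the one used there: the cocycle splitting of $\partial_1^h f$ into two differences with increments in $(0,h)$, periodicity to remove the shift $z\mapsto z+h'$, the substitution $h''=h-h'$, and Jensen's inequality, yielding the factor $2$. The only cosmetic difference is one of ordering: the standard write-up fixes $h'\in(0,h)$, takes $L^p$ norms in the pointwise identity $\partial_1^h f(z)=\partial_1^{h'}f(z)+\partial_1^{h-h'}f(z+h')$, and then averages the resulting inequality over $h'$, whereas you average the identity in $h'$ first and then take norms; the two are equivalent.
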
}

We define  $\eta _{w}=\partial _{2}w-\partial _{1}\frac{1}{2}w^{2}$, and thus $%
\left( \ref{periodicenergy}\right) $ can be written as 
\begin{equation}\label{periodic}
\mathcal{E}_{\varepsilon}(w)=\frac{1}{2}\int_{\mT^2}\frac{1}{\varepsilon}(|\partial_1|^{-1}\eta_w)^2+\varepsilon (\partial_1w)^2 dx.
\end{equation}
Finally, we introduce the $\e$-independent energy
\begin{equation}\label{epsilon ind}
\mathcal{E}(w) = \left( \int_{\mathbb{T}^{2}}\left( \left\vert \partial
_{1}\right\vert ^{-1}\eta _{w}\right) ^{2}dx\right) ^{\frac{1}{2}}\left(
\int_{\mathbb{T}^{2}}\left( \partial _{1}w\right) ^{2}dx\right) ^{\frac{1}{2}%
}\,,
\end{equation}
and note that
\begin{equation}\label{trivial bound}
\mathcal{E}(w) \leq \mathcal{E}_\e(w)\quad\textup{
for all $\e>0$}\,.
\end{equation}
\subsection{Besov and ${\bf\textit{$L^p$}}$ estimates}
\label{sec:besov}
We obtain the following estimates. {\color{black}The proofs follow closely those in \cite[Propositions 2.3-2.4]{IORT20}.}

\begin{lemma}\label{lemma 2.6}
There exists a universal constant $C_1>0$ such that if $w\in
L^{2}\left( \mathbb{T}^{2}\right) $ and has vanishing mean in $x_{1}$ and $%
h\in \left( 0,1\color{black}\right]$, then 
\begin{equation}
\int_{\mathbb{T}^{2}}\left\vert \partial _{1}^{h}w\right\vert ^{3}dx\leq C_1h%
\mathcal{E}(w)   \label{l3estimate}
\end{equation}%
and%

\begin{equation}
{\color{black}\sup_{x_{2}\in \left[ 0,1\right) }\int_{0}^{h}\int_{0}^{1}%
\left\vert \partial _{1}^{h^{\prime }}w\left( x_{1},x_{2}\right) \right\vert
^{2}dx_{1}dh^{\prime }\leq C_1\left( h\mathcal{E}(
w) +h^{\frac{5}{3}}\mathcal{E}^{\frac{2}{3}}(
w) \right)   \label{b2sestimate}}.
\end{equation}%
%for any $s\in \left[ 0,1\right] .$
\end{lemma}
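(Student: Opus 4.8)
The plan is to derive a Burgers-type evolution equation for the $x_1$-finite differences of $w$ and to run an entropy/testing argument against it. Set $v=\partial_1^h w=w(\cdot+h\mathbf{e}_1)-w$ and $s=w(\cdot+h\mathbf{e}_1)+w$; differencing the identity $\partial_2 w=\partial_1\tfrac12 w^2+\eta_w$ in $x_1$ yields
\[
\partial_2 v=\partial_1\!\big(\tfrac12\, s\,v\big)+\partial_1^h\eta_w .
\]
Since $w$ has vanishing mean in $x_1$, so does $\eta_w$, and I will write $\eta_w=\partial_1\Theta$ for its $x_1$-primitive $\Theta$, which satisfies $\|\Theta\|_{L^2}=\|\,|\partial_1|^{-1}\eta_w\|_{L^2}$ by the Hilbert-transform isometry. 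This is exactly the first factor of $\mathcal{E}(w)$, while $\|\partial_1 w\|_{L^2}$ is the second, so every bound should terminate in one Cauchy--Schwarz pairing these two norms. The common mechanism is: test the displayed equation against an entropy variable, integrate over $\mT^2$ so the exact $\partial_2$-derivative drops, integrate by parts in $x_1$, and close by pairing a finite difference of $\Theta$ against $\partial_1 w$ to produce $\mathcal{E}(w)$.

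For \eqref{l3estimate} I would first use shift-invariance of cubic integrals on $\mT^2$ (the pure cubes $\int (w(\cdot+h\mathbf{e}_1))^3$ and $\int w^3$ coincide) to rewrite $\int_{\mT^2} v^3$ in terms of the trilinear quantities $\int_{\mT^2} w^2\,\partial_1^h w$ and $\int_{\mT^2} w\,(\partial_1^h w)^2$. Each of these is estimated through the conservation form $\partial_2 w=\partial_1(\tfrac12 w^2+\Theta)$: integrating by parts in $x_1$ and $x_2$ trades one factor of $w$ for $\Theta$ and exposes $\partial_1 w$, while the linear factor $h$ is recovered from the representation $\partial_1^h w=\int_0^h \partial_1 w(\cdot+t\mathbf{e}_1)\,dt$. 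A single Cauchy--Schwarz then yields the bound $C_1 h\,\|\Theta\|_{L^2}\|\partial_1 w\|_{L^2}=C_1 h\,\mathcal{E}(w)$, the version with $|v|^3$ being handled by running the same steps with the entropy $\tfrac13|v|^3$.

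For \eqref{b2sestimate} I would set $g(x_2,h')=\int_0^1(\partial_1^{h'}w)^2\,dx_1$ and $G(x_2)=\int_0^h g\,dh'$ and split $\sup_{x_2}G\le \int_0^1 G\,dx_2+\tfrac12\int_0^1|\partial_{x_2}G|\,dx_2$. The mean term is controlled by H\"older and \eqref{l3estimate}: since $\int_{\mT^2}(\partial_1^{h'}w)^2\le\big(\int_{\mT^2}|\partial_1^{h'}w|^3\big)^{2/3}\le (C_1 h'\mathcal{E})^{2/3}$, integration in $h'$ produces the $h^{5/3}\mathcal{E}^{2/3}$ term. For the variation I differentiate in $x_2$ using the evolution equation, obtaining
\[
\partial_{x_2}g=\int_0^1 (\partial_1^{h'}w)^2\,\partial_1 w\,dx_1+2\int_0^1 \partial_1^{h'}w\;\partial_1^{h'}\eta_w\,dx_1 ,
\]
and the source term is integrated by parts in $x_1$ and bounded by $\|\partial_1\partial_1^{h'}w\|_{L^2}\|\partial_1^{h'}\Theta\|_{L^2}\le 4\mathcal{E}$, giving the $h\mathcal{E}$ term after integrating in $h'$. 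The hard part will be the nonlinear production term $\int (\partial_1^{h'}w)^2\partial_1 w$: it carries no factor of $\eta_w$, so the $|\partial_1|^{-1}$ pairing is unavailable and a crude H\"older estimate would demand the uncontrolled norm $\|\partial_1 w\|_{L^3}$. I expect to close it either by the same conservation-law cancellation already used for \eqref{l3estimate}, which should reduce it to an $\eta_w$-pairing, or by a self-improving argument in which $\sup_{x_2}g$ reappears on the right with a small prefactor and is absorbed; the averaging inequality \eqref{eqn:avebd} together with a Poincar\'e inequality in $x_1$ are the natural tools for that step.
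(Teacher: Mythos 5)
Your scaffolding is correct, and it is worth noting that your expansion trick genuinely proves the \emph{signed} version of \eqref{l3estimate}: writing $W$ and $\Theta$ for the periodic $x_1$-primitives of $w$ and $\eta_w$, the conservation form gives $\tfrac12 w^2=\partial_2 W-\Theta+c(x_2)$, and since $\int_0^1\partial_1^h w\,dx_1=0$, integrating by parts in $x_2$ and then $x_1$ yields
\[
\int_{\mathbb{T}^2}(\partial_1^h w)^3\,dx=6\int_{\mathbb{T}^2}w\,\partial_1^h\Theta\,dx-6\int_{\mathbb{T}^2}\Theta\,\partial_1^h w\,dx\,,
\]
whose absolute value is at most $12h\,\|\Theta\|_{L^2}\|\partial_1 w\|_{L^2}=12h\,\mathcal{E}(w)$. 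The genuine gap is that \eqref{l3estimate} concerns $\int|\partial_1^h w|^3$, and your mechanism cannot produce the absolute value: the cancellation of the pure cubes under shifts is an algebraic identity for the polynomial $(w(\cdot+h\mathbf{e}_1)-w)^3$ with no analogue for $|w(\cdot+h\mathbf{e}_1)-w|^3$, and the FTC representation of $\partial_1^h w$ cannot be paired with a non-polynomial integrand either. Your sentence ``the version with $|v|^3$ being handled by running the same steps with the entropy $\tfrac13|v|^3$'' is exactly the step that cannot be run. What is needed, and what the paper uses (quoting the Howarth--K\'arm\'an--Monin identities of \cite{IORT20}), is differentiation in the \emph{shift parameter}: with $v=\partial_1^{h'}w$,
\[
\partial_{h'}\int_{\mathbb{T}^2}\tfrac16|v|^3\,dx=\tfrac12\int_{\mathbb{T}^2}|v|v\,\partial_1 w(\cdot+h'\mathbf{e}_1)\,dx\,,
\]
and testing your differenced equation against $|v|$ over the torus, combined with the exact-derivative cancellation $\int|v|v\,\partial_1 v\,dx=0$, identifies the right-hand side with $-\int_{\mathbb{T}^2}\eta_w\,\partial_1^{-h'}|v|\,dx$; pairing with $\Theta$ and integrating $h'$ from $0$ to $h$ gives \eqref{l3estimate}, the factor $h$ arising from this $h'$-integration rather than from FTC.

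The same missing identity resolves what you correctly flag as the hard part of \eqref{b2sestimate}. Your formula for $\partial_{x_2}g$ is right, but the nonlinear production term is not something to be absorbed or converted into an $\eta_w$-pairing at fixed $h'$: it is an exact $h'$-derivative of the cube flux, since $\int_0^1 v^2\,\partial_1 v\,dx_1=0$ implies
\[
\int_0^1 v^2\,\partial_1 w\,dx_1=\tfrac13\,\partial_{h'}\int_0^1 v^3\,dx_1\,,
\]
so after the $h'$-integration already built into your $G$ it collapses to $\tfrac13\int_0^1(\partial_1^h w)^3\,dx_1$ at each $x_2$, and
\[
\int_0^1\Bigl|\tfrac13\int_0^1(\partial_1^h w)^3\,dx_1\Bigr|\,dx_2\le\tfrac13\int_{\mathbb{T}^2}\bigl|\partial_1^h w\bigr|^3\,dx\le C h\,\mathcal{E}(w)
\]
by \eqref{l3estimate}. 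Note that the absolute value sits \emph{inside} the $x_2$-integral, so the signed-cube bound you actually established is again insufficient here, just as it is in your H\"older step $\int_{\mathbb{T}^2}(\partial_1^{h'}w)^2\,dx\le\bigl(\int_{\mathbb{T}^2}|\partial_1^{h'}w|^3\,dx\bigr)^{2/3}$. Of your two fallbacks, neither works as stated: the term does not reduce to an $\eta_w$-pairing, and absorption has no small parameter available (crude estimates would require $\|v\|_{L^4}$ or an $L^1_{x_1}$ bound on $\partial_1 w$ at fixed $x_2$, neither of which is controlled). Everything else in your outline, namely the differenced Burgers equation, the isometry $\|\Theta\|_{L^2}=\|\,|\partial_1|^{-1}\eta_w\|_{L^2}$, the $W^{1,1}(\mathbb{T})\hookrightarrow L^\infty(\mathbb{T})$ splitting, the mean term, and the $\eta_w$-source term, matches the paper's argument; the single idea absent from your proposal, on which both halves of the lemma hinge, is the pair of HKM-type entropy identities in the $(x_2,h')$ variables.
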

\begin{proof}
{Throughout the proof, we assume that $w$ is smooth; once the estimates hold for smooth $w$, they hold in generality by approximation. The constant $C_1$ may change from line to line.} Following \cite[Equations (2.5)-(2.6)]{IORT20}, we apply the modified Howarth-K\'{a}rm\'{a}n-Monin identities for the
Burgers operator.  For every $h^{\prime }\in \left( 0,1\color{black}\right] $, we have%
\begin{equation}
\partial _{2}\frac{1}{2}\int_{0}^{1}\left\vert \partial _{1}^{h^{\prime
}}w\right\vert \partial _{1}^{h^{\prime }}w\,dx_{1}-\frac{1}{6}\partial
_{h^{\prime }}\int_{0}^{1}\left\vert \partial _{1}^{h^{\prime }}w\right\vert
^{3}dx_{1}=\int_{0}^{1}\partial _{1}^{h^{\prime }}\eta _{w}\left\vert
\partial _{1}^{h^{\prime }}w\right\vert dx_{1},  \label{HKM1}
\end{equation}%
\begin{equation}
\partial _{2}\frac{1}{2}\int_{0}^{1}\left( \partial _{1}^{h^{\prime
}}w\right) ^{2}dx_{1}-\frac{1}{6}\partial _{h^{\prime }}\int_{0}^{1}\left(
\partial _{1}^{h^{\prime }}w\right) ^{3}dx_{1}=\int_{0}^{1}\partial
_{1}^{h^{\prime }}\eta _{w}\partial _{1}^{h^{\prime }}w\,dx_{1}.  \label{HKM2}
\end{equation}
Integrating \eqref{HKM1} over $x_{2}$ and using the periodicity of $w$ yields 
\begin{eqnarray}
\partial _{h^{\prime }}\int_{\mathbb{T}^{2}}\left\vert \partial
_{1}^{h^{\prime }}w\right\vert ^{3}dx &=&-6\int_{\mathbb{T}^{2}}\partial
_{1}^{h^{\prime }}\eta _{w}\left\vert \partial _{1}^{h^{\prime
}}w\right\vert dx  \nonumber \\
&=&-6\int_{\mathbb{T}^{2}}\eta _{w}\partial _{1}^{-h^{\prime }}\left\vert
\partial _{1}^{h^{\prime }}w\right\vert dx.  \label{L3derivative}
\end{eqnarray}%
Now
\begin{eqnarray}
\left\vert \int_{\mathbb{T}^{2}}\eta _{w}\partial _{1}^{-h^{\prime
}}\left\vert \partial _{1}^{h^{\prime }}w\right\vert dx\right\vert  &\leq
&\left( \int_{\mathbb{T}^{2}}\left( \left\vert \partial _{1}\right\vert
^{-1}\eta _{w}\right) ^{2}dx\right) ^{\frac{1}{2}}\left( \int_{\mathbb{T}%
^{2}}\left( \partial _{1}\partial _{1}^{-h^{\prime }}\left\vert \partial
_{1}^{h^{\prime }}w\right\vert \right) ^{2}dx\right) ^{\frac{1}{2}} 
\nonumber \\
&\leq &C_1\left( \int_{\mathbb{T}^{2}}\left( \left\vert \partial
_{1}\right\vert ^{-1}\eta _{w}\right) ^{2}dx\right) ^{\frac{1}{2}}\left(
\int_{\mathbb{T}^{2}}\left( \partial _{1}w\right) ^{2}dx\right) ^{\frac{1}{2}%
},  \notag
\end{eqnarray}%
so that integrating \eqref{L3derivative} from $0$ to $h$ and using $\partial _{1}^{0}w=0,$
we have 
\begin{equation}
\int_{\mathbb{T}^{2}}\left\vert \partial _{1}^{h}w\right\vert ^{3}dx\leq
C_1\left( \int_{\mathbb{T}^{2}}\left( \left\vert \partial _{1}\right\vert
^{-1}\eta _{w}\right) ^{2}dx\right) ^{\frac{1}{2}}\left( \int_{\mathbb{T}%
^{2}}\left( \partial _{1}w\right) ^{2}dx\right) ^{\frac{1}{2}}h\leq C_1h%
\mathcal{E}(w) .  \notag
\end{equation}%

To prove \eqref{b2sestimate}, we integrate \eqref{HKM2} from $0$ to $h$ and again utilize  $\partial _{1}^{0}w=0$ to obtain
\begin{equation}
\partial _{2}\frac{1}{2}\int_{0}^{h}\int_{0}^{1}\left( \partial
_{1}^{h^{\prime }}w\right) ^{2}dx_{1}dh^{\prime }-\frac{1}{6}%
\int_{0}^{1}\left( \partial _{1}^{h}w\right)
^{3}dx_{1}=\int_{0}^{h}\int_{0}^{1}\partial _{1}^{h^{\prime }}\eta
_{w}\partial _{1}^{h^{\prime }}w\,dx_{1}dh^{\prime }.  \label{fderivative}
\end{equation}%
We set $$f\left( x_{2}\right) =\int_{0}^{h}\int_{0}^{1}\left( \partial
_{1}^{h^{\prime }}w\right) ^{2}dx_{1}dh^{\prime }\,,$$
and recall the Sobolev embedding inequality for $W^{1,1}\left( 
\mathbb{T}\right) \subset L^{\infty }\left( \mathbb{T}\right) $:%
\[
\sup_{z\in \mathbb{T}}\left\vert f\left( z\right) \right\vert \leq \int_{%
\mathbb{T}}\left\vert f\left( y\right) \right\vert dy+\int_{\mathbb{T}%
}\left\vert f^{\prime }\left( y\right) \right\vert dy\,.
\]%
Then applying this to $f(x_2)$ and referring to $\left( \ref%
{fderivative}\right) $, we have 
\begin{eqnarray}\label{supbd}
&&\sup_{x_{2}\in \left[ 0,1\right) }\int_{0}^{h}\int_{0}^{1}\left( \partial
_{1}^{h^{\prime }}w\right) ^{2}dx_{1}dh^{\prime } \\ \notag 
&\leq &\int_{0}^{h}\int_{%
\mathbb{T}^{2}}\left( \partial _{1}^{h^{\prime }}w\right) ^{2}dx\,dh^{\prime }
 \\ \notag
&&+\frac{1}{3}\int_{\mathbb{T}^{2}}\left\vert \partial _{1}^{h}w\right\vert
^{3}dx+2\int_{0}^{h}\int_{0}^{1}\left\vert \int_{0}^{1}\eta _{w}\partial
_{1}^{-h^{\prime }}\left\vert \partial _{1}^{h^{\prime }}w\right\vert
dx_{1}\right\vert x_{2}\,dh^{\prime }.  \nonumber
\end{eqnarray}%
Since 
\[
\int_{\mathbb{T}^{2}}\left( \partial _{1}^{h^{\prime }}w\right) ^{2}dx\leq
\left( \int_{\mathbb{T}^{2}}\left\vert \partial _{1}^{h^{\prime
}}w\right\vert ^{3}dx\right) ^{\frac{2}{3}}\leq {\color{black}C_1}\left( h^{\prime }\mathcal{E%
}( w) \right) ^{\frac{2}{3}},
\]%
and 
\begin{eqnarray*}
&&\int_{0}^{1}\left\vert \int_{0}^{1}\eta _{w}\partial _{1}^{-h^{\prime
}}\left\vert \partial _{1}^{h^{\prime }}w\right\vert dx_{1}\right\vert x_{2} \\
&\leq &\left( \int_{\mathbb{T}^{2}}\left( \left\vert \partial
_{1}\right\vert ^{-1}\eta _{w}\right) ^{2}dx\right) ^{\frac{1}{2}}\left(
\int_{\mathbb{T}^{2}}\left( \partial _{1}\partial _{1}^{-h^{\prime
}}\left\vert \partial _{1}^{h^{\prime }}w\right\vert \right) ^{2}dx\right) ^{%
\frac{1}{2}} \\
&\leq &C_1\left( \int_{\mathbb{T}^{2}}\left( \left\vert \partial
_{1}\right\vert ^{-1}\eta _{w}\right) ^{2}dx\right) ^{\frac{1}{2}}\left(
\int_{\mathbb{T}^{2}}\left( \partial _{1}w\right) ^{2}dx\right) ^{\frac{1}{2}%
},
\end{eqnarray*}%
$\left( \ref{supbd}\right) $ therefore implies  
\[
\sup_{x_{2}\in \left[ 0,1\right) }\int_{0}^{h}\int_{0}^{1}\left( \partial
_{1}^{h^{\prime }}w\right) ^{2}dx_{1}dh^{\prime }\leq C_1\left( h^{\frac{5}{3}}%
\mathcal{E}^{\frac{2}{3}}(w)+h\mathcal{E}%
( w) \right)\,, 
\]%
which is $\left( \ref{b2sestimate}\right) $. 
\end{proof}

\begin{lemma} \label{lm:lpbd}
{\color{black} If $w\in
L^{2}\left( \mathbb{T}^{2}\right) $ and has vanishing mean in $x_{1}$, then the following estimates hold:}
\begin{equation}
\left\Vert w\right\Vert _{\overset{\cdot }{\mathcal{B}}%
_{3;1}^{s}}\leq C_1\mathcal{E}^{\frac{1}{3}} (w) \,,
\text{ \ for every }s\in \left( {\color{black}0},\frac{1}{3}\right] , \label{b3sestimate}
\end{equation}
where $C_1$ is as in Lemma \ref{lemma 2.6};{\color{black}
\begin{equation}
\left\Vert w\right\Vert _{L^{p}\left( \mathbb{T}^{2}\right) }\leq
C_2(p)\mathcal{E}^{\frac{2}{3\alpha}}(w)\big(\mathcal{E}(w) + \mathcal{E}^{\frac{2}{3}}(w) \big) ^{\frac{\alpha-2}{2\alpha}}\,,  \label{lpestimate}
%\text{ for every }p>3. 
\end{equation}
for every $1\leq p < \frac{10}{3}$, where $\alpha= \max\{2,p \}$;} and  {\color{black}
%, $\alpha=\max\left \{\frac{1}{2},  \frac{p-2}{2}+\frac{1}{p}\right \}$%; and
\begin{equation}
\left\Vert w\right\Vert _{L^{p}\left( \mathbb{T}^{2}\right) }\leq C_2(p){\e^{-\frac{1}{\alpha}}}\mathcal{E}_{\e}^{\frac{1}{\alpha}}(w)\big(\mathcal{E}_\varepsilon(w) + \mathcal{E}_\varepsilon^{\frac{2}{3}}(w) \big) ^{\frac{\alpha-2}{2\alpha}} \label{eqn:lpepsilon}
\end{equation}
for every $\e >0$ and $1\leq p <6$, where again $\alpha=\max\{2,p \}$. 
}
\end{lemma}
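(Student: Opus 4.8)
The bound \eqref{b3sestimate} is immediate: by the definition of the directional Besov seminorm and \eqref{l3estimate}, for $s\le\tfrac13$,
\[
\|w\|_{\overset{\cdot}{\mathcal{B}}_{3;1}^{s}}=\sup_{h\in(0,1]}h^{-s}\Big(\int_{\mathbb T^2}|\partial_1^h w|^3\,dx\Big)^{1/3}\le\sup_{h\in(0,1]}h^{-s}\big(C_1 h\mathcal E(w)\big)^{1/3}=C_1^{1/3}\mathcal E^{1/3}(w)\sup_{h\in(0,1]}h^{\frac13-s},
\]
and the last supremum equals $1$ since $\tfrac13-s\ge0$. The heart of the matter is \eqref{lpestimate} and \eqref{eqn:lpepsilon}, for which the plan is to extract from Lemma \ref{lemma 2.6} two bounds on the $x_1$-difference quotients $\partial_1^h w$ and feed them into the one-dimensional reconstruction estimate of Lemma \ref{iortb9}, applied slice-by-slice in $x_1$. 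Concretely, \eqref{l3estimate} and $L^3\subset L^2$ give $\|\partial_1^h w\|_{L^2(\mathbb T^2)}\le(C_1 h\mathcal E(w))^{1/3}$, while \eqref{b2sestimate} combined with \eqref{eqn:avebd} (for $p=2$) yields $\|\partial_1^h w\|_{L^\infty_{x_2}L^2_{x_1}}\le C(\mathcal E(w)+h^{2/3}\mathcal E^{2/3}(w))^{1/2}$.

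To prove \eqref{lpestimate} for $2\le p<\tfrac{10}3$, I would fix $x_2$, apply Lemma \ref{iortb9} with exponents $(p,2)$ to $w(\cdot,x_2)$, take the $L^p_{x_2}$ norm, and move it inside the $dh$-integral by Minkowski's integral inequality, arriving at
\[
\|w\|_{L^p(\mathbb T^2)}\le C(p)\int_0^1 h^{-(\frac12-\frac1p)-1}\,\|\partial_1^h w\|_{L^p_{x_2}L^2_{x_1}}\,dh .
\]
The mixed norm is estimated by interpolating the two endpoint bounds in the outer variable (both have inner index $L^2_{x_1}$): $\|\partial_1^h w\|_{L^p_{x_2}L^2_{x_1}}\le\|\partial_1^h w\|_{L^2(\mathbb T^2)}^{1-\theta}\|\partial_1^h w\|_{L^\infty_{x_2}L^2_{x_1}}^{\theta}$ with $\theta=1-\tfrac2p$. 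Substituting the two displayed bounds and splitting $(\mathcal E+h^{2/3}\mathcal E^{2/3})^{\theta/2}\le\mathcal E^{\theta/2}+h^{\theta/3}\mathcal E^{\theta/3}$, the worst power of $h$ in the integrand is $h^{-3/2+5/(3p)}$, which is integrable on $(0,1]$ exactly when $p<\tfrac{10}3$; this is the origin of the critical exponent. Collecting the powers of $\mathcal E$ produces $\mathcal E^{2/(3p)}$ (from the $L^2(\mathbb T^2)$ endpoint, raised to $1-\theta=\tfrac2p$) times $(\mathcal E+\mathcal E^{2/3})^{\theta/2}$ with $\tfrac\theta2=\tfrac{p-2}{2p}$, i.e. the claimed expression with $\alpha=p$. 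The range $1\le p<2$ follows from $\|w\|_{L^p(\mathbb T^2)}\le\|w\|_{L^2(\mathbb T^2)}$, the $L^2$ bound being the $p=2$ endpoint (where $\alpha=2$ and the second factor is trivial).

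The estimate \eqref{eqn:lpepsilon} follows the same scheme, the only change being that the $L^2(\mathbb T^2)$ endpoint is replaced by the stronger, linearly decaying bound coming from the bending term: since $\e\int_{\mathbb T^2}(\partial_1 w)^2\le2\mathcal E_\e(w)$, we have $\|\partial_1 w\|_{L^2(\mathbb T^2)}\le(2\mathcal E_\e(w)/\e)^{1/2}$, hence $\|\partial_1^h w\|_{L^2(\mathbb T^2)}\le h\,(2\mathcal E_\e(w)/\e)^{1/2}$. Interpolating this against the sup bound (after using \eqref{trivial bound}, $\mathcal E\le\mathcal E_\e$, to phrase the latter via $\mathcal E_\e$) with the same $\theta=1-\tfrac2p$, the extra factor of $h$ improves the worst exponent to $h^{-3/2+3/p}$, integrable precisely for $p<6$; the endpoint factor $(2\mathcal E_\e/\e)^{(1-\theta)/2}=\e^{-1/p}\mathcal E_\e^{1/p}$ gives the $\e^{-1/\alpha}\mathcal E_\e^{1/\alpha}$ prefactor, and the sup bound again contributes $(\mathcal E_\e+\mathcal E_\e^{2/3})^{(\alpha-2)/(2\alpha)}$, with $1\le p<2$ handled by inclusion as before.

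I expect the main obstacle to be the bookkeeping in the mixed-norm interpolation and the verification that the $h$-integral converges with the sharp exponents $\tfrac{10}3$ and $6$ while producing precisely the stated powers of $\mathcal E$ (resp. $\mathcal E_\e$), rather than merely a convergent bound. In particular one must resist trying to control $w$ itself in $L^\infty_{x_2}L^2_{x_1}$, which need not be finite since the corresponding $\int_0^1\frac{dh}{h}$ diverges, and instead use the $L^\infty_{x_2}$ bound only for the difference quotients $\partial_1^h w$, pairing it with the globally integrated endpoint through the interpolation above.
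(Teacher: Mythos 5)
Your proposal is correct and follows essentially the same route as the paper: slice-wise application of Lemma \ref{iortb9} with $q=2$, Minkowski's integral inequality, splitting the mixed norm into the global $L^2(\mathbb{T}^2)$ factor and the $L^\infty_{x_2}L^2_{x_1}$ factor controlled via \eqref{eqn:avebd} and \eqref{b2sestimate}, and then bounding the $L^2$ factor by \eqref{l3estimate} for \eqref{lpestimate} and by the fundamental theorem of calculus plus \eqref{trivial bound} for \eqref{eqn:lpepsilon}. The only cosmetic differences (phrasing the H\"older step as interpolation, splitting $(\mathcal{E}+h^{2/3}\mathcal{E}^{2/3})^{\theta/2}$ rather than using $h\le 1$, and treating $p\le 2$ via the $p=2$ endpoint instead of letting $p'\searrow 2$) do not change the argument.
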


\begin{proof}
The estimate $\left( \ref{b3sestimate}\right) $ follows from $\left( \ref{l3estimate}%
\right) $ and the definition of $\left\Vert \cdot \right\Vert _{\overset{\cdot }{%
\mathcal{B}}_{3;1}^{s}}.$ Turning to $\left( \ref{lpestimate}\right)$-{\color{black}\eqref{eqn:lpepsilon}, we first prove a preliminary estimate. We} fix $%
x_{2}\in \left[ 0,1\right)$ and apply  Lemma \ref{iortb9} to $f\left( z\right)
=w\left( z,x_{2}\right) $ {\color{black}with {\color{black}$q=2,$ $p>2$} to deduce
\[
\left( \int_{0}^{1}\left\vert w\left( x_{1},x_{2}\right) \right\vert
^{p}dx_{1}\right) ^{\frac{1}{p}}\leq C_2(p)\int_{0}^{1}\frac{1}{\color{black}h^{\frac{1}{2}-%
\frac{1}{p}}}\left( \int_{0}^{1}\left\vert \partial _{1}^{h}w\left(
x_{1},x_{2}\right) \right\vert ^{\color{black}2}dx_{1}\right) ^{\color{black}\frac{1}{2}}\frac{dh}{h}.
\]
Integrating over $x_{2},$ we thus have by Minkowski's integral inequality 

\begin{align}\notag
\left\Vert w\right\Vert _{L^{p}\left( \mathbb{T}^{2}\right) } &=\left(
\int_{0}^{1}\int_{0}^{1}\left\vert w\left( x_{1},x_{2}\right) \right\vert
^{p}dx_{1}dx_{2}\right) ^{\frac{1}{p}} \\ \notag
 &\leq C_2(p) \left(\int_0^1 \left[\int_0^1 h^{\color{black}\frac{1}{p}-\frac{3}{2}}\left(\int_0^1\left \vert\partial_1^h w(x_1,x_2)\right \vert^{\color{black}2}dx_1\right)^{\color{black}\frac{1}{2}}dh\right]^pdx_2\right)^{\frac{1}{p}}\\ \notag
&\leq C_2(p)\int_{0}^{1}h^{\color{black}\frac{1}{p}-\frac{3}{2}}\left[ \int_{0}^{1}\left( %
\int_{0}^{1}\left\vert \partial _{1}^{h}w\left( x_{1},x_{2}\right)
\right\vert ^{\color{black}2}dx_{1}\right) ^{\color{black}\frac{p}{2}}dx_{2}\right]^{\frac{1}{p}} dh \\ \notag
&\leq C_2(p)\int_{0}^{1}h^{\color{black}\frac{1}{p}-\frac{3}{2}}\sup_{x_2 \in [0,1)}\left( %
\int_{0}^{1}\left\vert \partial _{1}^{h}w\left( x_{1},x_{2}\right)
\right\vert ^{\color{black}2}dx_{1}\right) ^{\frac{p-2}{2p}}\cdot \left( \int_{\mathbb{T}^{2}}\left\vert
\partial _{1}^{h}w\left( x\right) \right\vert ^{2}dx\right) ^{\frac{1}{p}}dh\,.
%\\
%&\stackrel{\eqref{eqn:avebd}, \eqref{b2sestimate}}\leq C_2(p) \int_{0}^{1}h^{\frac{3}{p}-\frac{3}{2}}\left(\mathcal{E}(w)+h^{\frac{2}{3}}\mathcal{E}^{\frac{2}{3}}(w)\right)^{\frac{p-2}{2p}}\left( \int_{\mathbb{T}^{2}}\left\vert
%\partial _{1}w\left( x\right) \right\vert ^{2}dx\right) ^{\frac{1}{p}}dh\\
%&\leq C_2(p) \e^{-\frac{1}{p}}\mathcal{E}_{\e}^{\frac{1}{2}}(
%w) \,,
\end{align}
{\color{black}The first term in the integrand can be estimated using \eqref{eqn:avebd} and \eqref{b2sestimate}, which gives
\begin{align*}
\sup_{x_2 \in [0,1)}\left( %
\int_{0}^{1}\left\vert \partial _{1}^{h}w\left( x_{1},x_{2}\right)
\right\vert ^{2}dx_{1}\right) ^{\frac{p-2}{2p}} &\leq\sup_{x_2 \in [0,1)}\left(  \frac{4}{h} \int_0^h \int_0^1 \left|\partial_1^{h'} w(x_1,x_2) \right|^2 \,dx_1dh' \right)^{\frac{p-2}{2p}} \\
&\leq C_1 \left(\mathcal{E}(w) + h^{\frac{2}{3}}\mathcal{E}^{\frac{2}{3}}(w) \right) ^{\frac{p-2}{2p}}\,,
\end{align*}
and therefore
\begin{equation}\label{startingpoint}
\| w \|_{L^p(\mathbb{T}^2)} \leq C_2(p) \big(\mathcal{E}(w) + \mathcal{E}^{\frac{2}{3}}(w) \big)^{\frac{p-2}{2p}}\int_0^1 h^{\frac{1}{p}-\frac{3}{2}}\left( \int_{\mathbb{T}^{2}}\left\vert
\partial _{1}^{h}w\left( x\right) \right\vert ^{2}dx\right)^{\frac{1}{p}}dh\,.
\end{equation}
To prove \eqref{lpestimate} and \eqref{eqn:lpepsilon} we estimate the $h$-integrand in two different fashions before integrating. For \eqref{lpestimate}, using H{\"o}lder's inequality and \eqref{l3estimate}, we have the upper bound
\begin{align}\notag
\left( \int_{\mathbb{T}^{2}}\left\vert
\partial _{1}^{h}w\left( x\right) \right\vert ^{2}dx\right) ^{\frac{1}{p}} \leq \left( \int_{\mathbb{T}^{2}}\left\vert
\partial _{1}^{h}w\left( x\right) \right\vert ^{3}dx\right) ^{\frac{2}{3p}}\leq C_1 h^{\frac{2}{3p}} \mathcal{E}^{\frac{2}{3p}}(w)\,.
\end{align}
%On the other hand, we use \eqref{eqn:avebd} and \eqref{b2sestimate} to estimate the first term by
%\begin{align*}
%\sup_{x_2 \in [0,1)}\left( %
%\int_{0}^{1}\left\vert \partial _{1}^{h}w\left( x_{1},x_{2}\right)
%\right\vert ^{2}dx_{1}\right) ^{\frac{p-2}{2p}} &\leq\sup_{x_2 \in [0,1)}\left(  \frac{4}{h} \int_0^h \int_0^1 \left|\partial_1^{h'} w(x_1,x_2) \right|^2 \,dx_1dh' \right)^{\frac{p-2}{2p}} \\
%&\leq C_1 \left(\mathcal{E}(w) + h^{\frac{2}{3}}\mathcal{E}(w) \right) ^{\frac{p-2}{2p}}\,.
%\end{align*}
Inserting this into \eqref{startingpoint} and using $p\in (2,10/3)$ yields
\begin{align*}
\|w\|_{L^p(\mathbb{T}^2)} &\leq C_2(p) \mathcal{E}^{\frac{2}{3p}}(w)\big(\mathcal{E}(w) + \mathcal{E}^{\frac{2}{3}}(w) \big) ^{\frac{p-2}{2p}}\int_0^1 h^{\frac{5}{3p}- \frac{3}{2}}\,dh \\ \notag
&=C_2(p)\mathcal{E}^{\frac{2}{3p}}(w)\big(\mathcal{E}(w) + \mathcal{E}^{\frac{2}{3}}(w) \big) ^{\frac{p-2}{2p}} \,,
\end{align*}
which is \eqref{lpestimate} when $p>2$. 
%where in the last step, we used \eqref{trivial bound} and 
%\[
%\int_{\mathbb{T}^{2}}\left\vert
%\partial _{1}w\left( x\right) \right\vert ^{2}dx \leq \e^{-1}\mathcal{E}_{\e}(w).
%\]
For $p\leq 2$, we apply \eqref{lpestimate} with $p'>2$, use the fact that $\|w\|_{L^p}\leq \| w\|_{L^{p'}}$, and let $p' \searrow 2$. Now for \eqref{eqn:lpepsilon}, we instead use the fundamental theorem of calculus and Jensen's inequality to estimate
\begin{align}\notag
\left(\int_{\mathbb{T}^2} \left|\partial_1^h w(x) \right|^2\,dx \right)^{\frac{1}{p}} &\leq \left(h^2 \int_{\mathbb{T}^2} \left(\partial_1 w(x) \right)^2\,dx \right)^{\frac{1}{p}} \\ \notag
&\leq h^{\frac{2}{p}}\varepsilon^{-\frac{1}{p}}\mathcal{E}_\varepsilon^{\frac{1}{p}}(w)\,.
\end{align}
When plugged into \eqref{startingpoint} and combined with \eqref{trivial bound}, this implies
\begin{align*}
\|w\|_{L^p(\mathbb{T}^2)} &\leq C_2(p) \varepsilon^{-\frac{1}{p}}\mathcal{E}_\varepsilon^{\frac{1}{p}}(w)\big(\mathcal{E}_\varepsilon(w) + \mathcal{E}_\varepsilon^{\frac{2}{3}}(w) \big) ^{\frac{p-2}{2p}}\int_0^1 h^{\frac{3}{p}- \frac{3}{2}}\,dh \\ \notag
&=C_2(p)\varepsilon^{-\frac{1}{p}}\mathcal{E}_\varepsilon^{\frac{1}{p}}(w)\big(\mathcal{E}_\varepsilon(w) + \mathcal{E}_\varepsilon^{\frac{2}{3}}(w) \big) ^{\frac{p-2}{2p}} 
\end{align*}
for $p\in (2,6)$. The case $p\in [1,2)$ is handled similarly as in \eqref{lpestimate}.}}
\end{proof}

\begin{remark}\color{black}
    Generalizing the previous argument to the 3D smectics model from \cite{NY2} is open. An intermediate step would be analyzing the Aviles-Giga model (which is a special case of the energy in \cite{NY2}) on $\mathbb{T}^2$ using these type of ideas.
\end{remark}

\subsection{Compactness and existence}
\label{sec:cptext}
We prove  compactness and existence theorems in this section. First we
define the admissible sets 
\[
\mathcal{A}_{\varepsilon }\mathcal{=}\left\{ w\in L^{2}\left( \mathbb{T}%
^{2}\right) :\int_{0}^{1}w\left( x_{1},x_{2}\right) dx_{1}=0\text{ for each }%
x_{2}\in \left[ 0,1\right) \text{ and }\mathcal{E}_{\varepsilon }(
w) <\infty \right\} 
\]%
{and
\[
\mathcal{A}\mathcal{=}\left\{ w\in L^{2}\left( \mathbb{T}%
^{2}\right) :\int_{0}^{1}w\left( x_{1},x_{2}\right) dx_{1}=0\text{ for each }%
x_{2}\in \left[ 0,1\right) \text{ and }\mathcal{E}(
w) <\infty \right\} .
\]
Note that for any positive $\varepsilon>0$, \eqref{trivial bound} implies that $\mathcal{A}_\varepsilon \subset \mathcal{A}$.}
We prove the following compactness result.

\begin{proposition}\label{prop:l2cpt}
If $\{w_{n}\}\subset \mathcal{A}$ satisfy $\mathcal{E}_{\varepsilon_n}( w_{n}) \leq {\color{black}C_3}<\infty$ {and $\sup_n| \varepsilon_n| \leq \varepsilon_0$},
then $\left\{ w_{n}\right\} $ is precompact in $L^{2}\left( \mathbb{T}%
^{2}\right) .$
\end{proposition}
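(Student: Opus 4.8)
The plan is to verify the standard Fourier criterion for $L^2$-precompactness on the torus: it suffices to show that $\{w_n\}$ is bounded in $L^2(\mT^2)$ and that its high frequencies are uniformly negligible, i.e. $\sup_n\sum_{|k|\geq N}|\widehat{w_n}(k)|^2\to 0$ as $N\to\infty$. Boundedness and all needed a priori estimates follow from \eqref{trivial bound} and Lemma \ref{lm:lpbd}: since $\mathcal{E}(w_n)\leq\mathcal{E}_{\e_n}(w_n)\leq C$, estimate \eqref{lpestimate} gives $\|w_n\|_{L^p}\leq C_p$ for every $p<10/3$, in particular a uniform $L^2$ bound. I will use two further consequences of the energy bound. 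First, writing $g_n:=|\partial_1|^{-1}\eta_{w_n}$, the compression term gives $\sum_k|\widehat{g_n}(k)|^2=\|g_n\|_{L^2}^2\leq 2\e_n\mathcal{E}_{\e_n}(w_n)\leq 2\e_0 C$, where the hypothesis $\sup_n\e_n\leq\e_0$ is exactly what keeps this bounded. Second, \eqref{l3estimate} together with $\|\partial_1^h w_n\|_{L^2}\leq\|\partial_1^h w_n\|_{L^3}$ (recall $|\mT^2|=1$) yields $\|w_n\|_{\overset{\cdot}{\mathcal{B}}_{2;1}^{1/3}}\leq C$, so Remark \ref{IORT remark} with $s'=1/3$ gives $\sum_k|k_1|^{2s}|\widehat{w_n}(k)|^2\leq C$ for each fixed $s<1/3$.

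I then split the frequency tail into $\{|k_1|>R\}$ and $\{|k_1|\leq R,\ |k_2|>M\}$. On the first region the fractional bound above gives $\sum_{|k_1|>R}|\widehat{w_n}(k)|^2\leq R^{-2s}\sum_k|k_1|^{2s}|\widehat{w_n}(k)|^2\leq CR^{-2s}$, uniformly small in $n$ for $R$ large; note that the vanishing-mean condition forces $\widehat{w_n}(k)=0$ when $k_1=0$, so $|k_1|\geq 2\pi$ on the support and no frequencies are lost. On the second region I use the Burgers constraint in Fourier, $ik_2\widehat{w_n}(k)=\widehat{\eta_{w_n}}(k)+ik_1\widehat{\tfrac12 w_n^2}(k)$, together with $\widehat{\eta_{w_n}}(k)=|k_1|\widehat{g_n}(k)$, to obtain $|\widehat{w_n}(k)|\leq\frac{|k_1|}{|k_2|}\big(|\widehat{g_n}(k)|+\tfrac12|\widehat{w_n^2}(k)|\big)$. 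The decisive gain is that here $|k_1|/|k_2|\leq R/M$, so the linear part contributes at most $\frac{CR^2}{M^2}\sum_k|\widehat{g_n}(k)|^2\leq\frac{C\e_0R^2}{M^2}$, which vanishes as $M\to\infty$ with $R$ fixed.

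The main obstacle is the nonlinear term, because $\sum_k|\widehat{w_n^2}(k)|^2=\|w_n\|_{L^4}^4$ is \emph{not} controlled by the energy. I avoid requiring $L^4$ by retaining the weight $|k_2|^{-2}$ and combining Hausdorff--Young with H\"older: fixing $p\in(2,10/3)$, the function $w_n^2$ is bounded in $L^{p/2}$ with $p/2\in(1,2)$, so $\|\widehat{w_n^2}\|_{\ell^{q'}}\leq\|w_n\|_{L^p}^2$ with $q'=\frac{p}{p-2}>2$. Applying H\"older to $\sum_{|k_1|\leq R,\,|k_2|>M}|k_2|^{-2}|\widehat{w_n^2}(k)|^2$ with the pair $(q'/2,a)$, where $a=\frac{p}{4-p}$ is the exponent conjugate to $q'/2$, and using $\sum_{|k_1|\leq R,\,|k_2|>M}|k_2|^{-2a}\leq CRM^{1-2a}$ (valid since $2a>1$), the nonlinear contribution is bounded by $C(R)\,\|w_n\|_{L^p}^4\,M^{(1/a)-2}$; since $a>1$ the exponent $\tfrac1a-2$ is negative, so this too vanishes as $M\to\infty$ with $R$ fixed.

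Finally I combine the estimates: given $\delta>0$, first choose $R$ so the first region is $\leq\delta/2$ uniformly, then choose $M$ so the second is $\leq\delta/2$ uniformly; the complementary set $\{|k_1|\leq R,\ |k_2|\leq M\}$ is finite, so $\sup_n\sum_{k\notin\{|k_1|\leq R,\,|k_2|\leq M\}}|\widehat{w_n}(k)|^2\leq\delta$, which delivers the uniform tail decay and hence $L^2(\mT^2)$-precompactness. The one genuinely delicate step is the nonlinear term in the second region; the role of the super-$L^2$ integrability furnished by Lemma \ref{lm:lpbd} is precisely to tame $\widehat{w_n^2}$ via Hausdorff--Young, while the structure of the energy (the $|\partial_1|^{-1}$ in the compression term against the $\partial_2$ in the Burgers operator) supplies the $|k_1|/|k_2|$ gain that makes the linear part trivial.
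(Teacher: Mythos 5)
Your proof is correct, and its skeleton coincides with the paper's own: reduce precompactness to uniform decay of the Fourier tails, split the tail into $\{|k_1|>R\}$ and $\{|k_1|\le R,\ |k_2|>M\}$, control the first region through \eqref{l3estimate} and Remark \ref{IORT remark}, and handle the linear part of the second region via the Burgers relation in Fourier together with $\||\partial_1|^{-1}\eta_{w_n}\|_{L^2}^2\le 2\varepsilon_0 C$ (this is exactly where the paper also uses the hypothesis $\sup_n \varepsilon_n\le\varepsilon_0$). The one place you genuinely diverge is the nonlinear term $\widehat{w_n^2}$ in the mixed region: you invoke Lemma \ref{lm:lpbd} for some $p\in(2,\frac{10}{3})$ and run Hausdorff--Young plus H\"older in $\ell^{q'}$ with $q'=\frac{p}{p-2}$, obtaining a bound $C(R)\,M^{\frac{1}{a}-2}$ with $a=\frac{p}{4-p}>1$; the paper instead uses only the trivial uniform bound $|\widehat{w_n^2}(k)|\le\|w_n\|_{L^2}^2\le C$ (its \eqref{fourierbd}), keeps $|\widehat{w_n^2}(k)|$ to the \emph{first} power, and just counts lattice points, $\sum_{|k_1|\le 2\pi M_1,\,|k_2|>2\pi M_2}|k_1|^2|k_2|^{-2}\le C M_1^3/M_2$, finally coupling the scales by $M_2=M_1^4$. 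Your route is more machinery-heavy---it genuinely needs the super-quadratic integrability of \eqref{lpestimate}, which the paper's proof uses only at $p=2$---but it buys a cleaner logical structure: for fixed $R$ the second region decays like a negative power of $M$, so $R$ and $M$ are chosen sequentially rather than coupled polynomially. Indeed, your opening observation that $\sum_k|\widehat{w_n^2}(k)|^2=\|w_n\|_{L^4}^4$ is not energy-controlled identifies the right obstacle, but the paper sidesteps it without H\"older or Hausdorff--Young simply by never squaring $\widehat{w_n^2}$; your detour is valid but avoidable. Both arguments are complete and yield the same conclusion.
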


\begin{proof}
By \eqref{lpestimate},
\begin{equation}\notag
\left\Vert w_{n}\right\Vert _{\color{black}L^2\left( \mathbb{T}%
^{2}\right) }\leq C_2(p)\mathcal{E}^{\frac{2}{3\alpha}}(w)\big(\mathcal{E}(w) + \mathcal{E}^{\frac{2}{3}}(w) \big) ^{\frac{\alpha-2}{2\alpha}} ,
\end{equation}
and {\color{black}thus, by \eqref{trivial bound} (that is, $\mathcal{E}(w)\leq \mathcal{E}_\varepsilon(w)$), $\left\Vert w_{n}\right\Vert _{L^{2}\left( 
\mathbb{T}^{2}\right) }\leq C_4$ depending on $p$ and $C_3$}. As a consequence, we can find $w_{0}\in L^{2}\left( \mathbb{T}%
^{2}\right) $ such that up to a subsequence, $w_{n}\rightharpoonup w_{0}$
weakly in $L^{2}\left( \mathbb{T}^{2}\right) .$ Therefore, for each $k\in (2\pi\mathbb{Z})^2$,
\begin{equation}
\widehat{w_{n}}\left( k\right) \rightarrow \widehat{w_{0}}\left( k\right) ,\,
%\text{ pointwise and }
\,\left\vert \widehat{w_{n}}\left( k\right) \right\vert
\leq \left( \int_{\mathbb{T}^{2}}w_{n}^{2}\right) ^{\frac{1}{2}}\leq {\color{black}C_4},\,\text{
and }\left\vert \widehat{w_{n}^{2}}\left( k\right) \right\vert \leq \int_{%
\mathbb{T}^{2}}w_{n}^{2}\leq {\color{black}C_4^{2}}.  \label{fourierbd}
\end{equation}%
We therefore know that for any fixed $N\in \mathbb{N},$ 
\[
\sum_{\substack{ \left\vert k_{1}\right\vert \leq 2\pi N, \\ \left\vert
k_{2}\right\vert \leq 2\pi N}}\left\vert \widehat{w_{n}}\left( k\right) -%
\widehat{w_{0}}\left( k\right) \right\vert ^{2}\rightarrow 0\text{ as }%
n\rightarrow \infty \,,
\]%
and so the strong convergence of $w_{n}$ $\rightarrow w_{0}$ would follow if 
\begin{equation}\label{uniform decay}
\sum_{\substack{ \left\vert k_{1}\right\vert >2\pi N \\ \textup{or} \\ \left\vert
k_{2}\right\vert >2\pi N}}\left\vert \widehat{w_{n}}\left( k\right)
\right\vert ^{2}\rightarrow 0\text{ uniformly in }n\text{ as }{N}\rightarrow
\infty .
\end{equation}
The rest of the proof is dedicated to showing \eqref{uniform decay}.

{We fix $0<s<1/3$ and appeal to Remark \ref{IORT remark} and \eqref{b3sestimate} to calculate}%
\begin{eqnarray}\notag
\int_{\mathbb{T}^{2}}\left\vert \left\vert \partial _{1}\right\vert
^{s}w_{n}\right\vert ^{2}&=&\dsum \left\vert k_{1}\right\vert ^{2s}\left\vert 
\widehat{w_{n}}\left( k\right) \right\vert ^{2}\leq C( s,\sfrac{1}{3}) \left\Vert w_{n}\right\Vert _{\overset{\cdot }{\mathcal{B}}%
_{2;1}^{1/3}}^{2}\\
&\leq &C( s,\sfrac{1}{3})\left\Vert w_{n}\right\Vert _{\overset{\cdot }{\mathcal{B%
}}_{3;1}^{1/3}}^{2}\leq C(s,\sfrac{1}{3}){\color{black}C_1}\mathcal{E}^{\frac{2}{3}%
}( w_{n}) \leq {\color{black}C_5},  \label{hsbd}
\end{eqnarray}%
{\color{black}for suitable $C_5$.} We recall the formula 
\[
\eta _{w}=\partial _{2}w-\partial _{1}\frac{1}{2}w^{2},
\]%
which, in terms of Fourier coefficients, reads 
\[
\widehat{\eta _{w}}\left( k\right) =-ik_{2}\widehat{w}\left( k\right) +\frac{%
1}{2}ik_{1}\widehat{w^{2}}\left( k\right) .
\]%
%For $M_{2}\geq M_{1}\geq 1,$
For $M_1$, $M_2\in \mathbb{N}$ to be chosen momentarily, we combine this with \eqref{fourierbd} and then \eqref{hsbd} to find %
\begin{align*}
&\sum_{\substack{ \left\vert k_{1}\right\vert >2\pi M_1 \\ \textup{or} \\\left\vert
k_{2}\right\vert >2\pi M_2}}\left\vert \widehat{w_{n}}\left( k\right)
\right\vert ^{2} \\
&\leq \sum_{\left\vert k_{1}\right\vert >2\pi M_{1}}\left\vert \widehat{%
w_{n}}\left( k\right) \right\vert ^{2}+\sum_{\substack{ \left\vert
k_{1}\right\vert \leq 2\pi M_{1} \\ \left\vert k_{2}\right\vert >2\pi M_{2}}}%
\left\vert \widehat{w_{n}}\left( k\right) \right\vert ^{2} \\
&\leq {\color{black}(2\pi M_{1})^{-2s}}\sum_{\left\vert k_{1}\right\vert >2\pi M_{1}}\left\vert
k_{1}\right\vert ^{2s}\left\vert \widehat{w_{n}}\left( k\right) \right\vert
^{2} +2\sum_{\substack{ \left\vert k_{1}\right\vert \leq 2\pi M_{1} \\ %
\left\vert k_{2}\right\vert >2\pi M_{2}}}\frac{1}{\left\vert
k_{2}\right\vert ^{2}}\left\vert \widehat{\eta _{w_{n}}}\left( k\right)
\right\vert ^{2}\\
& \qquad +{\color{black}\frac{1}{2}}\sum_{\substack{ \left\vert k_{1}\right\vert \leq 2\pi
M_{1} \\ \left\vert k_{2}\right\vert >2\pi M_{2}}}\frac{\left\vert
k_{1}\right\vert ^{2}}{\left\vert k_{2}\right\vert ^{2}}\left\vert \widehat{%
w_{n}^{2}}\left( k\right) \right\vert  \\
&\leq {\color{black}(2\pi M_{1})^{-2s}}\sum_{\left\vert k_{1}\right\vert >2\pi M_{1}}\left\vert
k_{1}\right\vert ^{2s}\left\vert \widehat{w_{n}}\left( k\right) \right\vert
^{2}+\frac{{\color{black}2}M_{1}^{2}}{M_{^{2}}^{2}}\sum_{\substack{ \left\vert
k_{1}\right\vert \leq 2\pi M_{1} \\ \left\vert k_{2}\right\vert >2\pi M_{2}}}%
\frac{1}{\left\vert k_{1}\right\vert ^{2}}\left\vert \widehat{\eta _{w_{n}}}%
\left( k\right) \right\vert ^{2}+{\color{black}\frac{C_4^2}{2}}\sum_{\substack{ \left\vert
k_{1}\right\vert \leq 2\pi M_{1} \\ \left\vert k_{2}\right\vert >2\pi M_{2}}}%
\frac{\left\vert k_{1}\right\vert ^{2}}{\left\vert k_{2}\right\vert ^{2}} \\
&\leq  {\color{black}(2\pi M_{1})^{-2s}C_5+\frac{2M_{1}^{2}}{M_{^{2}}^{2}}\times \varepsilon_0 \mathcal{E}_{\varepsilon_n}(w_n)+\frac{C_4^2}{2}\times
2(2\pi M_1)^3\times \frac{1}{\pi M_2}}.
%\frac{M_1(M_1+1)(2M_1+1)}{3}\times\frac{2}{M_2} }.
\end{align*}%
Taking $M_{1}=M\in \mathbb{N}$ and $M_{2}=M^{4},$ we find that
\[
\sum_{\substack{ \left\vert k_{1}\right\vert >2\pi M \\ \textup{or} \\ \left\vert
k_{2}\right\vert >2\pi M^4}}\left\vert \widehat{w_{n}}\left( k\right)
\right\vert ^{2}\rightarrow 0\text{ uniformly in $n$ as }M\rightarrow \infty \,,
\]
which concludes the proof of \eqref{uniform decay}.
\end{proof}

\begin{corollary}\label{cor:lpconv}
If $\{w_{n}\}\subset \mathcal{A}$ satisfy $\mathcal{E}_{\varepsilon_n}( w_{n}) \leq C<\infty$ {and $\sup_n| \varepsilon_n| \leq \varepsilon_0$},
then $\left\{ w_{n}\right\} $ is precompact in $L^{p}\left( \mathbb{T}%
^{2}\right)$ for any $p\in [1,{\color{black}\frac{10}{3}})$.
\end{corollary}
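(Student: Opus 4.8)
The plan is to combine the $L^2$ precompactness from Proposition \ref{prop:l2cpt} with the uniform higher integrability furnished by \eqref{lpestimate} and then interpolate. Since the hypotheses here are exactly those of Proposition \ref{prop:l2cpt}, every subsequence of $\{w_n\}$ admits a further subsequence converging strongly in $L^2(\mathbb{T}^2)$ to some limit $w_0$; it therefore suffices to show that any such $L^2$-convergent subsequence, which I do not relabel, also converges in $L^p(\mathbb{T}^2)$ for each $p\in[1,10/3)$. For $p\in[1,2]$ this is immediate, because $\mathbb{T}^2$ has finite measure and Hölder's inequality gives $\|w_n-w_0\|_{L^p}\leq \|w_n-w_0\|_{L^2}$, which tends to zero.

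Next I would treat the range $p\in(2,10/3)$. The key point is that \eqref{lpestimate} together with the trivial bound \eqref{trivial bound} yields, for every fixed $q\in[1,10/3)$ (with $\alpha=q$ when $q>2$), a uniform bound
\begin{equation*}
\sup_n \|w_n\|_{L^q(\mathbb{T}^2)} \leq C(q)\,\sup_n \mathcal{E}^{\frac{2}{3q}}(w_n)\big(\mathcal{E}(w_n)+\mathcal{E}^{\frac23}(w_n)\big)^{\frac{q-2}{2q}} \leq C(q)<\infty\,,
\end{equation*}
since $\mathcal{E}(w_n)\leq \mathcal{E}_{\varepsilon_n}(w_n)\leq C$. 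Given $p\in(2,10/3)$, fix any $q$ with $p<q<10/3$. Passing to a further subsequence so that $w_n\to w_0$ almost everywhere, Fatou's lemma gives $\|w_0\|_{L^q}\leq \liminf_n\|w_n\|_{L^q}\leq C(q)$, so that $w_0\in L^q$ as well.

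Finally I would interpolate. Writing $\tfrac1p=\tfrac{\theta}{2}+\tfrac{1-\theta}{q}$ with $\theta\in(0,1)$ (possible since $2<p<q$ forces $\tfrac1q<\tfrac1p<\tfrac12$), the standard interpolation inequality gives
\begin{equation*}
\|w_n-w_0\|_{L^p(\mathbb{T}^2)} \leq \|w_n-w_0\|_{L^2(\mathbb{T}^2)}^{\theta}\,\|w_n-w_0\|_{L^q(\mathbb{T}^2)}^{1-\theta} \leq C(q)^{1-\theta}\,\|w_n-w_0\|_{L^2(\mathbb{T}^2)}^{\theta}\,,
\end{equation*}
and the right-hand side tends to zero by the strong $L^2$ convergence. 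Since $q<10/3$ was arbitrary subject to $q>p$, this covers every $p\in(2,10/3)$ and completes the argument.

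I do not anticipate a genuine obstacle here: all of the analytic difficulty has already been absorbed into Lemma \ref{lm:lpbd} and Proposition \ref{prop:l2cpt}. The only points requiring care are recording that the higher integrability is uniform in $n$ all the way up to (but not including) the endpoint $10/3$, and phrasing the conclusion in terms of arbitrary subsequences, so that what is established is \emph{precompactness} in $L^p$ rather than convergence of a single preselected sequence.
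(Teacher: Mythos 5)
Your proposal is correct and is exactly the paper's argument: the paper's proof of Corollary~\ref{cor:lpconv} is a one-line appeal to the $L^2$ precompactness of Proposition~\ref{prop:l2cpt}, the uniform bound \eqref{lpestimate} (via \eqref{trivial bound}), and interpolation, which is precisely what you carry out. Your write-up simply fills in the routine details (subsequence bookkeeping, Fatou to place the limit in $L^q$, and the interpolation exponents), all of which are sound.
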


\begin{proof}
The conclusion follows from the precompactness of ${\color{black}\{w_n\}}$ in $L^2(\mT^2)$, the bound \eqref{lpestimate} from Lemma \ref{lm:lpbd}, and interpolation. 
\end{proof}
%{\color{black}
\begin{corollary} \label{cor:fixedlp}
If $\{w_{n}\}\subset \mathcal{A}$ satisfy $\mathcal{E}_{\varepsilon}( w_{n}) \leq C<\infty$ for a fixed $\e$,
then $\left\{ w_{n}\right\} $ is precompact in $L^{p}\left( \mathbb{T}%
^{2}\right)$ for any $p\in [1,6)$.
\end{corollary}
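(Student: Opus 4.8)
The plan is to mirror the proof of Corollary \ref{cor:lpconv}, but to replace the $\varepsilon$-independent bound \eqref{lpestimate} by its $\varepsilon$-dependent counterpart \eqref{eqn:lpepsilon}. Since $\varepsilon$ is now held fixed, the prefactor $\varepsilon^{-1/\alpha}$ in \eqref{eqn:lpepsilon} is a harmless constant, and the admissible range of exponents widens from $[1,\tfrac{10}{3})$ to $[1,6)$; this is precisely the gain that distinguishes the present statement from Corollary \ref{cor:lpconv}.

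First I would extract $L^2$ compactness. Taking the constant sequence $\varepsilon_n \equiv \varepsilon$ and $\varepsilon_0 = \varepsilon$, the hypotheses of Proposition \ref{prop:l2cpt} are satisfied, since $\mathcal{E}_{\varepsilon}(w_n)\le C$ and $\sup_n|\varepsilon_n| = \varepsilon < \infty$. Hence $\{w_n\}$ is precompact in $L^2(\mathbb{T}^2)$, and after passing to a subsequence (not relabeled) we obtain $w_n\to w_0$ strongly in $L^2(\mathbb{T}^2)$ and, along a further subsequence, almost everywhere.

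Next I would record a uniform high-integrability bound. Fix $p\in[1,6)$ and choose $q$ with $\max\{2,p\}<q<6$. Because $\varepsilon$ is fixed and $\mathcal{E}_\varepsilon(w_n)\le C$, estimate \eqref{eqn:lpepsilon} yields $\sup_n\|w_n\|_{L^q(\mathbb{T}^2)}\le C(q,\varepsilon)<\infty$. By Fatou's lemma applied along the almost-everywhere convergent subsequence, the same bound holds for the limit, so $w_0\in L^q(\mathbb{T}^2)$.

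Finally I would interpolate. For $p\le 2$ the strong $L^2$ convergence already gives convergence in $L^p$, since $\mathbb{T}^2$ has finite measure. For $p\in(2,6)$, writing $\tfrac1p=\tfrac{\theta}{2}+\tfrac{1-\theta}{q}$ for the corresponding $\theta\in(0,1)$, the interpolation inequality gives
\[
\|w_n-w_0\|_{L^p(\mathbb{T}^2)}\le \|w_n-w_0\|_{L^2(\mathbb{T}^2)}^{\theta}\,\|w_n-w_0\|_{L^q(\mathbb{T}^2)}^{1-\theta},
\]
and since the second factor stays bounded (by the uniform $L^q$ bound together with $w_0\in L^q$) while the first tends to $0$, we conclude $w_n\to w_0$ in $L^p(\mathbb{T}^2)$. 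I do not anticipate a genuine obstacle: the substantive work is already contained in Proposition \ref{prop:l2cpt} and in the $\varepsilon$-dependent estimate \eqref{eqn:lpepsilon}, and the only point requiring care is keeping the interpolation exponent $q$ strictly below $6$ so that \eqref{eqn:lpepsilon} remains applicable.
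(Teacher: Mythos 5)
Your proposal is correct and follows exactly the paper's own argument: $L^2$ precompactness via Proposition \ref{prop:l2cpt} with the constant sequence $\varepsilon_n\equiv\varepsilon$, the $\varepsilon$-dependent bound \eqref{eqn:lpepsilon} giving uniform $L^q$ control for $q<6$, and interpolation to conclude. The only difference is that you spell out the interpolation details (choice of $q$, Fatou for $w_0\in L^q$, the exponent $\theta$) that the paper leaves implicit in its one-sentence proof.
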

\begin{proof}
We again appeal to the precompactness of $w_n$ in $L^2(\mT^2)$ (taking  $\e_n=\e$ in Proposition \ref{prop:l2cpt}), but instead use the bound \eqref{eqn:lpepsilon} from Lemma \ref{lm:lpbd} before interpolating. 
\end{proof}
%\begin{remark}
%Clearly Corollary \ref{cor:lpconv} contains as special cases the two scenarios of a sequence $\{ w_n\}$ satisfying $\mathcal{E}_{\varepsilon_n}(w_n)\leq C < \infty$ and either of
%\begin{equation}\notag
%\varepsilon_n \equiv \varepsilon\textup{ for all $n$}\quad\textup{or}\quad \varepsilon_n \to 0\,.
%\end{equation}
%\end{remark}
%The main ingredients in the compactness proof in Proposition \ref{prop:l2cpt} are \eqref{hsbd}, $L^2$ bound on $|\partial_1|^{-1}\eta_{w_n}$ and uniform bound on $|\widehat {w_n^2 }(k)|$.  $L^2$ bound on $|\partial_1|^{-1}\eta_{w_n}$ follows from the bound on $\mathcal{E}_{\e_n}$ while \eqref{hsbd} and uniform bound on $|\widehat {w_n^2} (k)|$ is a direct corollary of  Lemma \ref{lm:lpbd}. Since estimates in Lemma \ref{lm:lpbd} are independent of $\e_n$,  for each $\e $ fixed,  we can repeat the same proof above and obtain the following compactness result.

%\begin{corollary}
%\label{compactness}Let $\varepsilon >0$ and $w_{n}\in \mathcal{A}%
%_{\varepsilon }$ satisfy $\mathcal{E}_{\varepsilon }\left( w_{n}\right) \leq
%C,$ then $\left\{ w_{n}\right\} $ is precompact in $L^{p}\left( \mathbb{T}%
%^{2}\right) $ for any $ p \geq 1$.
%\end{corollary}

\color{black}As a direct application of Corollary \ref{cor:fixedlp}, we can prove an existence theorem for the original smectic energy {\color{black}$E_\varepsilon$ defined in} \eqref{smecticenergy}. {\color{black}For any periodic $g:
%\partial\mathbb{T}^2
{\mathbb{T}^1}\to \mathbb{R}$, 
we define}
\[
\widetilde{\mathcal{A}}_{\color{black}\varepsilon,g }=\left\{ u\in W^{1,2}\left( \mathbb{T}%
^{2}\right) :E_{\varepsilon }\left( u\right) <\infty,\, {\color{black}\int_0^1 u(x_1,x_2) dx_1=g(x_2)  \text{  for a.e. }x_2 \in [0,1) } \right\} .
\]
{\color{black}We note that $\widetilde{\mathcal{A}}_{\color{black}\varepsilon,g }$ is non-empty for example when $g$ is smooth. }

\begin{corollary}\label{cor:exis}
For fixed $\varepsilon >0$, {\color{black}if $\widetilde{\mathcal{A}}_{\varepsilon,g}$ is non-empty,} then there exists $u_{\varepsilon }\in \widetilde{%
\mathcal{A}}_{\color{black}\varepsilon,g}$ such that $E_{\varepsilon }\left(
u_{\varepsilon }\right) =\inf_{u\in \widetilde{\mathcal{A}}_{\color{black}\varepsilon,g
}}E_{\varepsilon }\left( u\right) .$
\end{corollary}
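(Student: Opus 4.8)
The plan is to apply the direct method of the calculus of variations, with the essential compactness furnished by Corollary~\ref{cor:fixedlp}. Write $m=\inf_{u\in\widetilde{\mathcal A}_\e}E_\e(u)$; this is finite since $E_\e(0)=0$. Choose a minimizing sequence $\{u_n\}\subset\widetilde{\mathcal A}_\e$ with $E_\e(u_n)\to m$, and, using the invariance of $E_\e$ under adding constants, normalize so that $\int_{\mT^2}u_n\,dx=0$. Set $w_n=\partial_1 u_n$ and $g_n=\partial_2 u_n-\tfrac12 w_n^2$, so that $E_\e(u_n)<\infty$ forces $g_n,\ \partial_{11}u_n\in L^2$ and hence $w_n^2=2(\partial_2 u_n-g_n)\in L^2$. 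Since $\eta_{w_n}=\partial_1 g_n$ and $|\partial_1|^{-1}\partial_1$ is an $L^2$-contraction (it acts as $i\,\mathrm{sgn}(k_1)$ on Fourier coefficients), we obtain $\mathcal E_\e(w_n)\le E_\e(u_n)\le C$; together with \eqref{trivial bound} this places $\{w_n\}\subset\mathcal A$.

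First I would extract strong convergence of the nonlinear term. By Corollary~\ref{cor:fixedlp}, a subsequence of $\{w_n\}$ converges strongly in $L^p$ for every $p\in[1,6)$; in particular $w_n\to w_0$ in $L^4$, and writing $w_n^2-w_0^2=(w_n-w_0)(w_n+w_0)$ and applying H\"older gives $w_n^2\to w_0^2$ in $L^2$. Next I would produce a weak $W^{1,2}$ limit. The energy bounds $\partial_{11}u_n=\partial_1 w_n$ and $g_n$ in $L^2$, and since $\{w_n\}$ is bounded in $L^4$, $\partial_2 u_n=g_n+\tfrac12 w_n^2$ is bounded in $L^2$; with $\int_{\mT^2}u_n=0$, the Poincar\'e inequality shows $\{u_n\}$ is bounded in $W^{1,2}(\mT^2)$. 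Passing to a further subsequence, $u_n\rightharpoonup u_0$ weakly in $W^{1,2}(\mT^2)$. Uniqueness of weak limits identifies $\partial_1 u_0=w_0$ (matching the weak limit of $\partial_1 u_n$ with its strong $L^2$ limit) and, since $\partial_1 w_n\rightharpoonup\partial_1 w_0$ in $L^2$, gives $\partial_{11}u_0\in L^2$; moreover $g_n\rightharpoonup g_0:=\partial_2 u_0-\tfrac12 w_0^2$ weakly in $L^2$, because $\partial_2 u_n\rightharpoonup\partial_2 u_0$ weakly while $\tfrac12 w_n^2\to\tfrac12 w_0^2$ strongly.

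Finally I would invoke lower semicontinuity. Both terms of $E_\e$ are squared $L^2$ norms of quantities converging weakly in $L^2$, so weak lower semicontinuity of the $L^2$ norm and superadditivity of $\liminf$ yield
\[
E_\e(u_0)=\frac{1}{2\e}\|g_0\|_{L^2}^2+\frac{\e}{2}\|\partial_{11}u_0\|_{L^2}^2\le\liminf_n\Big(\frac{1}{2\e}\|g_n\|_{L^2}^2+\frac{\e}{2}\|\partial_{11}u_n\|_{L^2}^2\Big)=\liminf_n E_\e(u_n)=m.
\]
Thus $E_\e(u_0)\le m<\infty$, so $u_0\in\widetilde{\mathcal A}_\e$, while $E_\e(u_0)\ge m$ by definition of $m$; hence $u_\e:=u_0$ is a minimizer.

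The only genuinely delicate point is the strong $L^4$ convergence of $w_n=\partial_1 u_n$ supplied by Corollary~\ref{cor:fixedlp}: it is exactly what allows passage to the limit in the quadratic nonlinearity $\tfrac12(\partial_1 u_n)^2$, since weak $W^{1,2}$ convergence alone does not control $w_n^2$. This is precisely the compactness that was unavailable in previous approaches. The remaining steps---the $W^{1,2}$ bound via Poincar\'e and the convexity/weak lower semicontinuity of the two energy terms---are standard.
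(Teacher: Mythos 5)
Your proof is correct, and its skeleton is the same as the paper's: the direct method, with the crucial compactness coming from the fixed-$\e$ corollary applied to $w_n=\partial_1 u_n$, strong $L^4$ convergence handling the quadratic nonlinearity, and weak lower semicontinuity finishing the argument. Within that skeleton, your implementation of the lower-semicontinuity step is different and somewhat cleaner: the paper expands the compression term as $(\partial_2 u_n)^2-(\partial_1 u_n)^2\partial_2 u_n+\tfrac14(\partial_1 u_n)^4$ and passes to the limit term by term (weak lower semicontinuity for the first, a strong-times-weak pairing for the cross term, strong $L^4$ convergence for the quartic), whereas you keep the square intact, observe that $g_n=\partial_2 u_n-\tfrac12 w_n^2\rightharpoonup g_0=\partial_2 u_0-\tfrac12 w_0^2$ weakly in $L^2$ (weak plus strong), and apply weak lower semicontinuity of the $L^2$ norm to $g_n$ directly. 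You also supply two details the paper leaves implicit: the inequality $\mathcal{E}_\e(\partial_1 u_n)\le E_\e(u_n)$ (via the fact that $|\partial_1|^{-1}\partial_1$ is an $L^2$ contraction), which is what legitimizes invoking the compactness results for $\{w_n\}$, and the actual construction of the limit $u_0$ (mean normalization, Poincar\'e, weak $W^{1,2}$ compactness); the paper simply asserts $\partial_1 u_n\to\partial_1 u_0$ in $L^4$ ``for some $u_0$.'' Finally, note that the paper's proof as written cites Corollary \ref{cor:lpconv}, which only yields precompactness for $p<\tfrac{10}{3}<4$; your appeal to Corollary \ref{cor:fixedlp} (fixed $\e$, $p\in[1,6)$) is the correct reference for $L^4$ convergence, and is in fact what the sentence preceding the paper's proof announces.
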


\begin{proof}[{\color{black}Proof}]
%For simplicity of notation, we assume $\varepsilon =1.$
{\color{black}Since admissible class is non-empty, we can} let $u_{n}$ be a
minimizing sequence for
$$
E_{\varepsilon }\left( u\right)=\frac{1}{2}\int_{\Omega }\frac{1}{\e }\left( \partial_2 u-\frac{1%
}{2}(\partial_1 u)^{2}\right) ^{2}+\e (\partial_{11} u)^{2}\,dx;
$$
{\color{black}in particular, the energies are uniformly bounded. By Corollary \ref{cor:fixedlp}}, we have, up to a subsequence that we do not relabel,
\begin{equation}\label{L4 convergence}
\partial_1 u_{n}\rightarrow \partial_1 u_{0}\quad\textup{in $L^{4}\left( 
\mathbb{T}^{2}\right) $}
\end{equation} 
for some $u_{0}$.
% {\color{black}W^4}\left( \mathbb{T}%^{2}\right) .$ 
Since $u_n$ is a minimizing sequence, the first term in $E_\varepsilon$ combined with the $L^4$-convergence of $\partial_1 u_n$ implies that $\{\partial_2 u_n \}$ are uniformly bounded in $L^2(\mathbb{T}^2)$. Thus, up to a further subsequence which we do not notate, there exists $v_0 \in L^2$ such that $\partial_2 u_n \rightharpoonup v_0$ weakly in $L^2(\mathbb{T}^2)$. Furthermore, by the uniqueness of weak limits, it must be that $v_0 = \partial_2 u_0$, so $u_0 \in W^{1,2}(\mathbb{T}^2)$. %Combining $\left( \ref{lpestimate}\right) ,$ we have $%
%u_{nx}\rightarrow u_{0x}$ in $L^{4}\left( \mathbb{T}^{2}\right) $ and $u_{nz}
%$ is bounded in $L^{4}\left( \mathbb{T}^{2}\right) .$
Expanding 
\begin{equation*}
 \int_{\Omega }\left( \partial_2 u_n-\frac{(\partial_1 u_n)^2}{2}\right)^2\,dx =\int_{\Omega }\left[ (\partial_2 u_n)^2-(\partial_1 u_n)^2\partial_2 u_n + \frac{1}{4}(\partial_1 u_n)^4\right]\,dx \,,%\\
%&=&\int_{\mathbb{T}^{2}}\left( u_{nz}^{2}-u_{nz}u_{nx}^{2}+\frac{1}{4}%
%u_{nx}^{4}+u_{nxx}^{2}\right) dxdz,
\end{equation*}%
we see that by \eqref{L4 convergence}, the lower semicontinuity of the $L^2$-norm under weak convergence, and the fact that  
\[
\lim_{n\rightarrow \infty }\int_{\mathbb{T}^{2}}(\partial_1 u_{n})^2 \partial_2u_{n}\,dx=\int_{%
\mathbb{T}^{2}}(\partial_1 u_{0})^2 \partial_2u_{0}\,dx,
\]%
we have
\begin{equation}\label{potential convergences}
     \liminf_{n\to \infty}\int_{\mathbb{T}^2 }\left( \partial_2 u_n-\frac{(\partial_1 u_n)^2}{2}\right)^2\,dx \geq  \int_{\mathbb{T}^2 }\left( \partial_2 u-\frac{(\partial_1 u)^2}{2}\right)^2\,dx .
\end{equation}
{\color{black}Also, the uniform $L^2$-bound on $\partial_{11} u$ and the uniqueness of limits implies that, up to a subsequence, $\partial_{11} u_n \rightharpoonup \partial_{11} u_0$ weakly in $L^2(\mathbb{T}^2)$, and thus
\begin{equation}\label{lsc of elastics}
    \liminf_{n\to \infty}\int_\Omega (\partial_{11} u_n)^2 \,dx \geq \int_{\mathbb{T}^2} (\partial_{11} u_0)^2\,dx\,.
\end{equation}
Putting together \eqref{potential convergences}-\eqref{lsc of elastics}, we} conclude 
\[
{\color{black}\inf_{\mathcal{A}_{\varepsilon,g}}E_\varepsilon =} \liminf_{n\rightarrow \infty }E_{\varepsilon }(u_{n}) \geq
E_{\varepsilon }( u_{0}) .
\]
{\color{black}Finally, {\color{black}by Poincare's inequality and the weak convergence of $\nabla u_n$ to $\nabla u_0$ in $L^2(\mathbb{T}^2)$, we conclude that $u_n$ converges  to $u_0$ strongly in $L^2(\mathbb{T}^2)$.  Hence $$\int_0^1 u(x_1,x_2) dx_1 =\lim_{n \rightarrow \infty} \int_0^1 u_n(x_1,x_2) dx_1=g(x_2) \text{ for a.e. } x_2 \in [0,1),$$ therefore}
%since the boundary data $g$ for each $u_n$ is preserved under the weak $L^2$-convergence $\nabla u_n\rightharpoonup \nabla u_0$, 
$u_0$ belongs to $\mathcal{A}_{\varepsilon,g}$ and is a minimizer.}
\end{proof}

\section{Lower bound}\label{sec:lbd}
We consider the question of finding a limiting functional as a lower bound for $E_{\e}$ as $\e$ goes to zero. Given a sequence $\{w_\varepsilon\}$ with $E_{\e} (w_{\e})\leq C$ and $\e\to 0$, then 
\begin{equation}\label{distributionally to zero}
\int_{\mT^2}(|\partial_1|^{-1}\eta_{w_{\e}})^2 dx \rightarrow 0.
\end{equation}
Therefore $\eta_{w_{\e}} \rightarrow 0$ distributionally and the natural function  space for the limiting problem is 
$$
\mathcal{A}_0=\{w\in L^2(\mT^2):\,  \eta_w=-\partial_1 \frac{1}{2}w^2+\partial_2 w =0 \text{ in } \mathcal{D}'\}.
$$

%Since $H^1_{dR}(\mT^2)=\mZ^2$, given any $w \in \mathcal{A}_0\cap L^4(\mT^2)$, by de Rham theory, we can write 
%\begin{equation}
%\sigma(w)=(-\frac{1}{2}w^2, w)=(u_y, -u_x)+\mathbf{u}_0=\nabla u^{\bot}+\mathbf{u}_0 \label{eqn:sigw}
%\end{equation}
% for some $u \in W^{1,2}(\mT^2)$ and constant vector $\mathbf{u}_0$.

%\subsection{Entropy Production}

\subsection{Properties of BV functions}
Let $\Omega \subset \mathbb{R}^2$ be a bounded open set. We first recall the BV structure theorem. For $v\in {\color{black}[BV(\Omega)]^2}$, the Radon measure $Dv$ can be decomposed as 
$$
Dv=D^a v+D^c v+D^j v 
$$
where $D^a v$ is  the absolutely continuous part of $Dv$ with respect to Lebesgue measure $\mathcal{L}^2$ and $D^c v$, $D^j v$ are the Cantor part and the jump part, respectively. All three measures are mutually singular. Furthermore, $D^a v=\nabla v \mathcal{L}^2\,\mres \Omega$ where $\nabla v$ is the approximate differential of $v$; $D^c v= D_s v \,\mres (\Omega\backslash S_v)$ and $D^j v= D_s v\, \mres{J_v} $, where $D_s v$ is the singular part of $Dv$ with respect to $\mathcal{L}^2$, $S_v$ is the set of approximate discontinuity points of $v$, and $J_v$ is the jump set of $v$. Since $J_v$ is countably $\mathcal{H}^1$-rectifiable, $D^j v$ can be expressed as 
$$
(v^+ -v^-)\otimes\nu \,\mathcal{H}^1\mres {J_v},
$$ where $\nu$ is orthogonal to the approximate tangent space at each point of $J_v$ and $v^+$, $v^-$ are the traces of $v$ from either side of $J_v$. 

% When $w=\nabla u \in BV(\Omega, \mathbb{R}^2)$ for some $u \in W^{1,1}(\Omega)$. The following proposition gives the structure decomposition for $D^2 u=D \nabla u$. 
%\begin{proposition} (Proposition 3.5 in \cite{AmbDeLMan99}) \label{prop:gradient}
%Let $u\in W^{1,1}(\Omega)$ and assume that $\nabla u \in BV(\Omega, \mathbb{R}^2)$. Then $D^2u=D \nabla u$ is a symmetric matrix valued measure with finite total variation in $\Omega$ representable as 
%\begin{eqnarray*}
%D^2u&&=D^2_a u+D^2_c u+D^2_j u\\
%&&=\nabla ^2 u \mathcal{L}^2+H|D^2_c u|+(\nabla u^+-\nabla u^-)\otimes \nu_{\nabla u}\mathcal{H}^1\mres { J_{\nabla u}},
%\end{eqnarray*}
%where
%\begin{itemize}
%\item[(i)] $\nabla^2 u(x)$ is a symmetric matrix for $\mathcal{L}^2$-a.e. $x\in \Omega$;
%\item[(ii)] $H(x)$ is a unit symmetric matrix with rank 1 for $|D^2_c u|$-a.e. $x\in \Omega$;
%\item[(iii)] $\mu_{\nabla u}(x)$ is parallet to $\nabla^+ u(x)-\nabla ^- u(x)$ for $\mathcal{H}^1$-a.e. $x\in J_{\nabla u}$.
%\end{itemize}
%\end{proposition}
Next we quote the following general chain rule formula for BV functions. 

\begin{theorem}(\cite[Theorem 3.96]{AmbFusPal00})\label{bvchain theorem}
Let $w \in {\color{black}[BV(\Omega)]^2}$, $\Omega \subset \mathbb{R}^2$, and $f\in [C^1(\R^2)]^2$  be a Lipschitz function satisfying $f(0)=0$ if $|\Omega|=\infty$. Then $v=f\circ w$ belongs to $[BV(\Omega)]^2$ and 
\begin{equation}\label{eqn:bvchain}
D v=\nabla f(w) \nabla w \mathcal{L}^2\,\mres \Omega+\nabla f(\tilde w) D^c w  + (f(w^+)-f(w^-))\otimes \nu_w \mathcal{H}^{1}\mres { J_w}.
\end{equation}
Here $\tilde{w}(x)$ is the approximate limit of $w$ at $x$ and is defined on $\Omega \backslash J_w$. 
\end{theorem}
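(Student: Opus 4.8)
The plan is to prove the three summands in \eqref{eqn:bvchain} by identifying them as the absolutely continuous, Cantor, and jump parts of $Dv$ and computing each density separately. First I would establish that $v=f\circ w\in[BV(\Omega)]^2$ together with the comparison $|Dv|\le\mathrm{Lip}(f)\,|Dw|$. Since $f$ is Lipschitz, this follows from the one-dimensional characterization of $BV$: restricting $w$ to almost every line parallel to a coordinate direction produces a one-variable $BV$ function, and post-composition with a Lipschitz map does not increase the pointwise variation of each section, so integrating the sectional variations and taking a supremum over directions gives $v\in BV$ with the stated bound. The estimate $|Dv|\le\mathrm{Lip}(f)\,|Dw|$ yields $Dv\ll|Dw|$, so $Dv$ splits along the mutually singular decomposition $|Dw|=|D^a w|+|D^c w|+|D^j w|$, and it remains only to compute the density of $Dv$ with respect to $|Dw|$ on each of the three carriers.

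The absolutely continuous and jump parts are the routine ones. At $\mathcal L^2$-a.e. point $w$ is approximately differentiable, and the classical chain rule for the approximate differential of a $C^1$ composition gives $\nabla v=\nabla f(w)\,\nabla w$ a.e., producing the first term. For the jump part, at $\mathcal H^1$-a.e. point of $J_w$ the blow-up of $w$ is a two-valued function with traces $w^\pm$ and jump normal $\nu_w$; continuity of $f$ forces $v$ to have the same jump set (up to the $\mathcal H^1$-null set where $f(w^+)=f(w^-)$, which contributes nothing) with traces $f(w^\pm)$, and the rectifiable structure of $J_w$ then yields $D^j v=(f(w^+)-f(w^-))\otimes\nu_w\,\mathcal H^1\mres J_w$.

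The heart of the matter, and the step I expect to be the main obstacle, is the Cantor part: proving $D^c v=\nabla f(\tilde w)\,D^c w$. Here I would again pass to one-dimensional sections and reduce to the scalar one-variable statement that for $g\in BV(I)$ and $f\in C^1$ one has $D^c(f\circ g)=f'(\tilde g)\,D^c g$. The delicate point is that the $C^1$ regularity of $f$, not merely Lipschitz regularity, is essential: for a generic Lipschitz $f$ the Cantor-part formula fails and must be replaced by an averaged differential in the spirit of Vol'pert. To prove the $C^1$ version I would cover $|D^c g|$-almost all of the support by small intervals on which $g$ oscillates little, using that $|D^c g|$-a.e. point is a point of approximate continuity of $g$ and that $D^c g$ charges no atoms; on each such interval I linearize $f$ about the approximate value $\tilde g$ and control the error through the modulus of continuity of the continuous derivative $f'$. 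Summing these local estimates and letting the mesh tend to zero identifies the density as $f'(\tilde g)$.

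Finally, I would reassemble the sections: using the disintegration of $D^c w$ along lines together with the fact that the Cantor part carries a well-defined polar vector but no jump direction, the one-dimensional identities integrate up to the claimed two-dimensional formula $\nabla f(\tilde w)\,D^c w$. The rigorous slicing and reassembly, and the uniform control of the linearization error against the Cantor measure, are the technical crux on which the whole argument rests.
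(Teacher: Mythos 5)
You should know at the outset that the paper contains no proof of Theorem \ref{bvchain theorem}: it is quoted verbatim from \cite[Theorem 3.96]{AmbFusPal00} and used as a black box (in Lemma \ref{A0 lemma} and in \eqref{eqn:dsigw}), so the only meaningful comparison is with the textbook proof. Measured against that, your outline is a correct skeleton for this classical result, but it takes a genuinely different route. The steps you and the textbook share are the Lipschitz comparison $|Dv|\le \mathrm{Lip}(f)\,|Dw|$, hence $Dv\ll|Dw|$ and the splitting of $Dv$ along the three mutually singular carriers, the approximate-differentiability chain rule for $D^a v$, and the trace analysis on $J_w$ for $D^j v$. The difference is the Cantor part: the proof in \cite{AmbFusPal00} does not slice, but works directly in $n$ dimensions, differentiating $Dv$ against $|Dw|$ at $|D^c w|$-a.e.\ point $x$ (which is a point of approximate continuity), and handling the error by linearizing $f$ at $\tilde w(x)$ --- the $C^1$ hypothesis makes $f-f(\tilde w(x))-\nabla f(\tilde w(x))(\cdot-\tilde w(x))$ have small Lipschitz constant on a small ball around $\tilde w(x)$ --- together with a truncation/locality argument. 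Your slicing route is viable, but be aware of what it silently consumes: the fact that the decomposition $Dw=D^aw+D^cw+D^jw$ and the precise representative commute with one-dimensional sections is itself a nontrivial structure theorem (see \cite[Section 3.11]{AmbFusPal00}), of comparable depth to the chain rule itself; your ``reassembly'' step is exactly an invocation of it and should be cited as such, not treated as routine disintegration. What slicing buys you is that the delicate linearization happens in 1D, where the good representative of a $BV$ function is continuous off the countable set of atoms, so your covering argument closes easily; what the direct proof buys is independence from the slicing theory.

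Two smaller corrections. First, your claim that for merely Lipschitz $f$ the Cantor-part formula ``fails'' is not accurate in the setting at hand: here $w$ is scalar (despite the paper's notation $f\in[C^1(\R^2)]^2$, the map is really $f:\R\to\R^2$ applied to the scalar $w$), and for scalar arguments the chain rule survives for Lipschitz $f$, because the coarea formula gives $|Dw|\left(\tilde w^{-1}(N)\setminus S_w\right)=0$ for every $\mathcal{L}^1$-null set $N\subset\R$, so that $f'(\tilde w)$ is defined $|D^c w|$-a.e.\ (Vol'pert, Ambrosio--Dal Maso); $C^1$-regularity becomes essential only for vector-valued arguments. Second, when you identify the three restrictions of $Dv$ with $D^a v$, $D^c v$, $D^j v$, you are implicitly using that $S_v\subset S_w$, that $J_v\subset J_w$ up to $\mathcal{H}^1$-null sets, and that $|D^c w|$ vanishes on sets that are $\sigma$-finite with respect to $\mathcal{H}^1$; these facts are standard but should be stated, since without them ``splitting along the carriers'' does not yet say anything about the canonical decomposition of $Dv$.
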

{\color{black}In what follows, we will use Theorem 3.1 to compute the distributional divergence of such $f\circ w$ as the trace of the measure \eqref{eqn:bvchain}, that is
\begin{equation}\label{divtrace}
    \dive (f \circ w) = \mathrm{tr}\,(\nabla f(w) \nabla w) \mathcal{L}^2\,\mres \Omega + \mathrm{tr}\,(\nabla f(\tilde w) D^c w) + (f(w^+)-f(w^-))\cdot\nu_w \mathcal{H}^{1}\mres { J_w}
\end{equation}
as measures.}
{
\begin{lemma}\label{A0 lemma}
If $w\in \mathcal{A}_0 \cap (BV \cap L^\infty)(\mathbb{T}^2)$, then denoting by $D_i^a w$ and $D_i^c w$ the $i$-th components of the measures $D^a w$ and $D^c w$, we have 
\begin{equation}\notag
(-w D_1^a w + D_2^a w)  = 0\quad %{\color{black}\mathcal{L}^2-\textup{a.e. and}}
\textit{and}
\quad (-\tilde{w} D_1^c w + D_2^c w)  = 0 %\quad {\color{black}|D^c w|-\textup{a.e.}}\,.
\end{equation} 
{\color{black}as measures, and,} {\color{black} setting $\sigma(w) = (-w^2/2,w)$,
\begin{equation}\label{compatibility condition}
\left[ \sigma(w^+) - \sigma(w^-)\right]\cdot\nu_w = 0\quad\mathcal{H}^1\textit{-a.e. on }J_w\,.
\end{equation}}
\end{lemma}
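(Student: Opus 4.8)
The plan is to observe that the constraint $\eta_w=0$ says precisely that the $L^\infty$ vector field $\sigma(w)=(-w^2/2,w)$ is divergence-free, since $\dive\sigma(w)=\partial_1(-w^2/2)+\partial_2 w=\eta_w$. I would then apply the $BV$ chain rule of Theorem \ref{bvchain theorem} to $\sigma(w)$, take the trace of the resulting matrix-valued measure $D\sigma(w)$ to recover $\dive\sigma(w)$, and split the identity $\dive\sigma(w)=0$ using the mutual singularity of the absolutely continuous, Cantor, and jump parts.

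First I would address the Lipschitz hypothesis: although $\sigma(s)=(-s^2/2,s)$ is only $C^1$ and not globally Lipschitz, the hypothesis $w\in L^\infty(\mathbb{T}^2)$ lets me replace $\sigma$ outside the interval $[-\|w\|_\infty,\|w\|_\infty]$ by a globally Lipschitz $C^1$ map without altering $\sigma\circ w$, so Theorem \ref{bvchain theorem} applies. With $\nabla\sigma(s)=(-s,1)$ it yields
\[
D\sigma(w) = \nabla\sigma(w)\,\nabla w \,\mathcal{L}^2\mres\mathbb{T}^2 + \nabla\sigma(\tilde w)\,D^c w + \big(\sigma(w^+)-\sigma(w^-)\big)\otimes\nu_w \,\mathcal{H}^1\mres J_w .
\]

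Next I would contract the diagonal of this $2\times 2$ matrix measure, i.e. form $\sum_i D_i\big(\sigma(w)\big)_i=\dive\sigma(w)$. Using $\partial_i w\,\mathcal{L}^2 = D_i^a w$ together with the entries of $\nabla\sigma$, the three pieces contract to
\[
\dive\sigma(w) = \big(-w D_1^a w + D_2^a w\big) + \big(-\tilde w D_1^c w + D_2^c w\big) + \big[\big(\sigma(w^+)-\sigma(w^-)\big)\cdot\nu_w\big]\,\mathcal{H}^1\mres J_w .
\]
On the other hand $\dive\sigma(w)=\eta_w=0$ by $w\in\mathcal{A}_0$, the distributional identity upgrading to an identity of measures because $\sigma(w)\in BV$.

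Finally, the three summands on the right are carried by mutually singular sets: the first is absolutely continuous with respect to $\mathcal{L}^2$, the second is $\mathcal{L}^2$-singular and vanishes on every set that is $\sigma$-finite for $\mathcal{H}^1$, and the third is concentrated on the rectifiable set $J_w$. Since their sum is the zero measure, each must vanish separately, giving the two stated measure identities and, from the jump term, $(\sigma(w^+)-\sigma(w^-))\cdot\nu_w=0$ for $\mathcal{H}^1$-a.e.\ point of $J_w$. I expect the chain rule to do the heavy lifting; the only delicate points are the truncation needed to invoke it and the bookkeeping that identifies $\dive\sigma(w)$ with the zero measure and decouples the three mutually singular parts.
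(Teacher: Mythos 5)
Your proposal is correct and follows essentially the same route as the paper's own proof: apply the BV chain rule of Theorem \ref{bvchain theorem} to $\sigma(w)=(-w^2/2,w)$, identify $\dive\sigma(w)$ with the zero measure via $w\in\mathcal{A}_0$, and invoke the mutual singularity of $D^a w$, $D^c w$, and $D^j w$ to kill each piece separately. The only difference is that you make explicit two points the paper leaves implicit --- the truncation of $\sigma$ needed to meet the Lipschitz hypothesis (justified by $w\in L^\infty$) and the contraction of the matrix-valued measure $D\sigma(w)$ to its divergence --- which is careful bookkeeping rather than a different argument.
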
}
\begin{proof}
Let $\sigma(w) = (-w^2/2,w)$. By virtue of $w\in \mathcal{A}_0 \cap (BV \cap L^\infty)(\mathbb{T}^2)$ {\color{black}and
% the corollary 
\eqref{divtrace},}
%of Theorem \ref{bvchain theorem}}, 
we know that, in the sense of distributions,
\begin{align}\notag
0 &= -\partial_1 \frac{1}{2}w^2+\partial_2 w \\ \notag
&= \dive \sigma(w) \\ \label{sum of three}
& = (-w D_1^a w + D_2^a w)  + (-\tilde{w} D_1^c w + D_2^c w)  + (\sigma(w^+) - \sigma(w^-))\cdot \nu_{w}\mathcal{H}^1\mres J_w\,.
\end{align}
But the measures $D^a w$, $D^c w$, and $D^j w$ are mutually singular, which implies that each individual term in \eqref{sum of three} is the zero measure. {\color{black}The lemma immediately follows.}
\end{proof}

\subsection{Limiting functional and the proof of the lower bound}
Let
$$
\Sigma(w)=\left(-\frac{1}{3}w^3,\frac{1}{2}w^2\right)\,.
$$
If $w \in {\color{black}\mathcal{A}_0\cap}(BV \cap  L^{\infty})(\mT^2)$, we can apply the chain rule \eqref{eqn:bvchain} and Lemma \ref{A0 lemma} to $\Sigma(w)$, yielding
\begin{align} \notag
%&& \dive \sigma(w)=(-w\partial_1 w+\partial_2 w) \mathcal{L}^2+(-\tilde{w}\partial_1^c w+\partial_2^c w) +\left ( \sigma(w^{+})-\sigma(w^{-})\right )\cdot \nu_w \mathcal{H}^1\mres J_{w}\\
%\label{eqn:dSigw}
\dive \Sigma (w) &=w(-w{\color{black}D^a_1} w+{\color{black}D^a_2}w) \mathcal{L}^2+\tilde{w}(-\tilde{w}{\color{black}D_1^c} w+{\color{black}D_2^c} w) 
\\ \notag
&\qquad+\left ( \Sigma (w^{+})-\Sigma(w^{-})\right )\cdot \nu_w \mathcal{H}^1\mres J_{w}\\ \label{eqn:dsigw}
&= \left ( \Sigma (w^{+})-\Sigma(w^{-})\right )\cdot \nu_w \mathcal{H}^1\mres J_{w}\,.
\end{align}
%Here $\tilde w$ is the approximate limit of $w$ defined off of $J_w$. For $w \in \mathcal{A}_0$, \eqref{eqn:sigw}, \eqref{eqn:dsigw} and Proposition \ref{prop:gradient} yield
%\begin{equation}
%\dive \sigma(w)=\dive (u_y,-u_x)=\left({\nabla u}^{\bot}(+)-{\nabla u}^{\bot}(-)\right)\otimes \nu_{w} \mathcal{H}^1 \mres  J_w. \label{eqn:divesigw}
%\end{equation}
%Comparing \eqref{eqn:dsigw} and \eqref{eqn:divesigw}, we conclude 
%\begin{equation}
%-w\partial_1 w+\partial_2 w=0   \hspace{0.1 in} \mathcal{L}^2-\text{a.e.  and }-\tilde{w}\partial_1^c w+\partial_2^c w=0 \hspace{0.1 in}  |D^c \nabla u^{\bot}|-\text{a.e. }\label{eqn:jump}
%\end{equation}
%For $w\in \mathcal{A}_0$, it follows from  \eqref{eqn:dSigw} and \eqref{eqn:jump} that
%\begin{equation}
%\dive \Sigma(w)=\left (\Sigma(w^{+})-\Sigma(w^{-}) \right)\cdot \nu_w \mathcal{H}^1 \mres { J_w}. \label{eqn:diveSigw}
%\end{equation}

\begin{remark}
\color{black}Observe if  $w=u_x$  and $u_z=\frac{1}{2}u_x^2$, the entropy $\Sigma (w)$ here is exactly the entropy $\tilde{\Sigma}(\nabla u)=-(u_xu_z-\frac{1}{6}u_x^3, \frac{1}{2}u_x^2)$, which we used in the lower bound estimates in \cite{NY1}. In fact, the argument below also gives a proof of the lower bound on any domain $\Omega \subset \mathbb{R}^2$; the only necessary modification of the proof presented above is that one does not use $|\partial_1|^{-1} \eta_w$ to represent the compression energy, but rather the original expression from \eqref{smecticenergy}. %In addition, when comparing with the lower bound proof from \cite{NY1}, this proof requires an extra integration by parts, as it does not rely on a pointwise lower bound on the energy density (see e.g. \cite[Equation (4.11)]{NY1}). The relationship between these two entropies and the structure of the corresponding arguments is exactly mirrored in the entropies devised in \cite{JinKoh00, DKMO01} for the Aviles-Giga problem - they are equal on the zero set of the potential term, and both give lower bounds, with only one of them (\cite{JinKoh00}) bounding the energy density from below pointwise.
\end{remark}

\begin{theorem}\label{thm:lower bound}
Let   $\e_n  \searrow 0$, $\{w_n\} \subset L^2(\mT^2)$ with $\partial_1 w_n \in L^2 (\mT^2)$ such that 
\begin{equation}
w_n \rightarrow w \text{ in } L^3(\mT^2),
\end{equation}
for some $w \in (BV \cap  L^{\infty})(\mT^2)$. Then 
\begin{equation}\label{liminfequation}
\liminf_{n \rightarrow \infty} \mathcal{E}_{\e_n}(w_n)\geq \int_{J_w}\frac{|w^+-w^-|^3}{12 \sqrt{1+\frac{1}{4}(w^++w^-)^2}} d\mathcal{H}^1.%{\color{black}= |\mathrm{div}\, \Sigma(w)|(\mathbb{T}^2)}.
\end{equation}
\end{theorem}
\begin{remark}\color{black}
    Due to recent progress on the  rectifiability for the defect set to certain solutions of Burgers equation \cite{Mar22}, the lower bound should in fact be valid among a larger class of limiting functions. Specifically, if $w\in \mathcal{A}_0 \cap L^\infty(\mathbb{T}^2)$ and for every smooth convex entropy $\Phi:\mathbb{R}\to \mathbb{R}$ and corresponding entropy flux $\Psi:\mathbb{R}\to \mathbb{R}$ with $\Psi'(v) = -\Phi'(v)v$, 
    \begin{equation}\label{finite entropy condition}
        \partial_1 \Psi(w) + \partial_2 \Phi(w) \textup{ is a finite Radon measure},
    \end{equation}
    then there exists an $H^1$-rectifiable set $J_w$ with strong traces on either side such that
    \begin{equation}\label{limiting energy}
        |\dive \Sigma(w)| = \frac{|w^+-w^-|^3}{12 \sqrt{1+\frac{1}{4}(w^++w^-)^2}} \mathcal{H}^1\mres J_w.
    \end{equation}
    In particular, by substituting any entropy/entropy flux pair for $\Sigma$ in the the argument below, one finds that for an energy bounded sequence, any limiting function $w$ satisfies \eqref{finite entropy condition} and thus \eqref{limiting energy}. Technically, applying the results of \cite{Mar22} to deduce \eqref{limiting energy} would require extending the arguments there from $[0,T] \times \mathbb{R}$ to the bounded domain $\mathbb{T}^2$ as in \cite{MarARMA,MarAdvCalcVar} and proving that \eqref{finite entropy condition} implies that $w\in C^0([0,1];L^1(\mathbb{T}^1))$ (the continuous in time dependence being a technical assumption in \cite[Definition 1.1]{Mar22}). {Regarding the regularity assumption, it is known (see e.g. \cite[Remark 5.2]{MarCalVarPDE}, \cite[pg. 191]{JOP}) that the argument of Vasseur \cite{Vas01} applies in this context and gives a representative of $w$ belonging to $C^0([0,1];L^1(\mathbb{T}^1))$. The extension of \cite{Mar22} to a bounded domain} should not present serious difficulties, although we have not pursued the details further. {\color{black}The concentration of the entropy measures on an $\mathcal{H}^1$-rectifiable jump set should be a key step in obtaining the full $\Gamma$-convergence of $E_\varepsilon$ in \eqref{smecticenergy} to the limiting energy \eqref{limiting energy}.  The remaining obstacles to such a result are the construction of a recovery sequence for functions that with gradients that do not belong to $BV \cap L^\infty$ (as the existing technology from \cite{ConDeL07,Pol07} uses both those assumptions) and the strengthening of the results of \cite{Mar22} to include functions which do not belong to $L^\infty$.}
\end{remark}
%\begin{remark}
%The same argument holds when $w \notin (BV \cap  L^{\infty})(\mT^2)$ and implies that {\color{black}if the limit inferior of the energies is finite, then }$|\mathrm{div}\, \Sigma(w)|$ is a finite Radon measure; however there is no explicit expression for the limiting functional $|\dive \Sigma(w)|$ in this case. In addition, the lower bound is sharp when $w \in {\color{black}\mathcal{A}_0}\cap(BV \cap  L^{\infty})(\mT^2)$ by \cite{NY1}.
%\end{remark}
\begin{proof}[Proof of Theorem \ref{thm:lower bound}] 
Without loss of generality, we assume $\liminf_{n\rightarrow \infty} \mathcal{E}_{\e_n}(w_n) <\infty${\color{black}, so that $w\in \mathcal{A}_0$ by \eqref{distributionally to zero}. 
Now} {\color{black} for any smooth $v$}, direct calculation shows 
\begin{eqnarray}%\label{eqn:divsig}
%\dive \sigma(w) &=& \eta_w \\ 
\label{eqn:divSig}
\dive \Sigma ({\color{black}v})&=&\partial_1(-\frac{1}{3}{\color{black}v}^3)+\partial_2(\frac{1}{2}{\color{black}v}^2)\\ \notag 
&=&{\color{black}v}(\partial_2 {\color{black}v}-{\color{black}v}\partial_1{\color{black}v})={\color{black}v}\eta_{\color{black}v}. 
\end{eqnarray}
On the other hand, we can bound $\mathcal{E}_{\e}$ from below as follows:
\begin{eqnarray} \label{eqn:energybdbelow}
\mathcal{E}_{\e}({\color{black}v})&=&\frac{1}{2}\int_{\mathbb{T}^2}\frac{1}{\e}\left(|\partial_1|^{-1}\left(\partial_2 {\color{black}v}-\partial_1 \frac{1}{2} {\color{black}v}^2\right)\right)^2
+\e (\partial_1 {\color{black}v})^2 dx \\ \notag 
& = & \frac{1}{2\e} \left\Arrowvert |\partial_1|^{-1} \eta_{\color{black}v}\right\Arrowvert_{L^2(\mT^2)}^2+\frac{\e}{2}\left\Arrowvert \partial_1 {\color{black}v}\right \Arrowvert^2_{L^2(\mT^2)}\\ \notag 
& \geq & \left\Arrowvert |\partial_1|^{-1} \eta_{\color{black}v}\right\Arrowvert_{L^2(\mT^2)}\left\Arrowvert \partial_1 {\color{black}v}\right \Arrowvert_{L^2(\mT^2)}.
\end{eqnarray}
From \eqref{eqn:divSig} and \eqref{eqn:energybdbelow}, given any smooth periodic function $\phi$, for any smooth ${\color{black}v}$, we have
 \begin{eqnarray}\label{eqn:upbdsmooth}
&&\left|-\int_{{\mT}^2}\Sigma({\color{black}v})\cdot \nabla \phi\, dx \right|=\left|\int_{{\mT}^2}\dive \Sigma ({\color{black}v}) \phi \,dx\right| \\ \notag
& \leq &\left(\int_{{\mT}^2}||\partial_1|^{-1} \eta_{{\color{black}v}}|^2 dx\right)^{\frac{1}{2}}\left(\int_{{\mT}^2}|\partial_1 ({\color{black}v} \phi)|^2dx\right)^{\frac{1}{2}}\\ \notag 
& \leq &\left\Arrowvert |\partial_1|^{-1} \eta_{{\color{black}v}}\right\Arrowvert_{L^2(\mT^2)}\left\Arrowvert \partial_1 {\color{black}v}\right \Arrowvert_{L^2(\mT^2)}\left\Arrowvert \phi\right\Arrowvert_{L^{\infty}(\mT^2)}\\ \notag 
&&+\left\Arrowvert |\partial_1|^{-1} \eta_{{\color{black}v}}\right\Arrowvert_{L^2(\mT^2)}\| {\color{black}v}\|_{L^2(\mT^2)}\left\Arrowvert \partial_1\phi\right\Arrowvert_{L^{\infty}(\mT^2)}\\ \notag 
& \leq & \mathcal{E}_{\e}({\color{black}v})\left\Arrowvert \phi\right\Arrowvert_{L^{\infty}(\mT^2)}+C\sqrt{\e}\mathcal{E}_{\e}({\color{black}v})^{\frac{1}{2}}\| {\color{black}v}\|_{L^2(\mT^2)}\left\Arrowvert \partial_1\phi\right\Arrowvert_{L^{\infty}(\mT^2)} \,.
\end{eqnarray}
By the density of smooth functions in $L^2(\mT^2)$, \eqref{eqn:upbdsmooth} holds for any ${\color{black}v} \in L^2(\mT^2)$ with $|\partial_1|^{-1} \eta_{\color{black}v}, \partial_1 {\color{black}v} \in L^2(\mT^2)$. Thus 
\begin{eqnarray}
&&\left|-\int_{{\mT}^2}\Sigma(w_n)\cdot \nabla \phi \,dx \right|\\ \notag 
& \leq & \mathcal{E}_{\e_n}(w_n)\left\Arrowvert \phi\right\Arrowvert_{L^{\infty}(\mT^2)}+C\sqrt{\e_n}\mathcal{E}_{\e_n}(w_n)^{\frac{1}{2}}\|  w_n\|_{L^2(\mT^2)}\left\Arrowvert \partial_1\phi\right\Arrowvert_{L^{\infty}(\mT^2)} \,.
\end{eqnarray}
Letting $n \rightarrow \infty$, by the strong convergence of $w_n$ in $L^3(\mT^2)$, we have $\Sigma(w_n) \rightarrow \Sigma(w)$ in $L^1(\mT^2)$, so that
\begin{eqnarray}\label{eqn:limSig}
-\int_{{\mT}^2}\Sigma(w)\cdot \nabla \phi \,dx &=&-\lim_{n\rightarrow \infty}\int_{{\mT}^2}\Sigma(w_n)\cdot \nabla \phi\, dx \\ \notag
&\leq& \liminf_{n\rightarrow \infty}\mathcal{E}_{\e_n}(w_n)\left\Arrowvert \phi\right\Arrowvert_{L^{\infty}(\mT^2)}.
\end{eqnarray}
{\color{black}By taking the supremum over all smooth test functions $\phi$ with $\|\phi\|_{L^\infty}\leq 1$ in \eqref{eqn:limSig},} we see that $|\mathrm{div}\,\Sigma(w)|(\mathbb{T}^2)$ is a lower bound for the energies. To derive the explicit expression for this measure, we note that since $w\in \mathcal{A}_0 \cap (BV \cap L^\infty)(\mathbb{T}^2)$, \eqref{compatibility condition} and \eqref{eqn:dsigw} apply, so that
\begin{align*}\notag
|\mathrm{div}\,\Sigma(w)|(\mathbb{T}^2) = \left|\left[\Sigma(w^+) - \Sigma(w^-) \right]\cdot \frac{\left(\sigma(w^+)-\sigma(w^-)\right)^\perp}{|\sigma(w^+)-\sigma(w^-)|} \right| \mathcal{H}^1 \mres J_{w} \,.
\end{align*}
The right hand side of this equation can be calculated directly from the formulas for $\sigma(w)$ and $\Sigma(w)$ and simplifies to \eqref{liminfequation} (see \cite[Proof of Lemma 4.1, Equation (6.3)]{NY1}).
\end{proof}

\begin{remark}
When comparing with the lower bound proof from \cite{NY1}, this proof requires an extra integration by parts, as it does not rely on a pointwise lower bound on the energy density (see e.g. \cite[Equation (4.11)]{NY1}). The relationship between these two entropies and the structure of the corresponding arguments is exactly mirrored in the entropies devised in \cite{JinKoh00, DKMO01} for the Aviles-Giga problem - they are equal on the zero set of the potential term, and both give lower bounds, with only one of them (\cite{JinKoh00}) bounding the energy density from below pointwise.
\end{remark}

%\begin{remark}
%Observe if  $w=u_x$  and $u_z=\frac{1}{2}u_x^2$, the entropy $\Sigma (w)$ we constructed here is exactly the entropy $\tilde{\Sigma}(\nabla u)=-(u_xu_z-\frac{1}{6}u_x^3, \frac{1}{2}u_x^2)$, which we used in the lower bound estimates in \cite{NY1}. In fact, the argument here also gives a proof of the lower bound on any domain $\Omega \subset \mathbb{R}^2$; the only {\color{black}necessary modification of the proof presented above} is that one does not use $|\partial_1|^{-1} \eta_w$ to represent the compression energy, but rather the original expression from \eqref{smecticenergy}. In addition, when comparing with the lower bound proof from \cite{NY1}, this proof requires an extra integration by parts, as it does not rely on a pointwise lower bound on the energy density (see e.g. \cite[Equation (4.11)]{NY1}). The relationship between these two entropies and the structure of the corresponding arguments is exactly mirrored in the entropies devised in \cite{JinKoh00, DKMO01} for the Aviles-Giga problem - {\color{black}they are equal on the zero set of the potential term, and both give lower bounds, with only one of them (\cite{JinKoh00}) bounding the energy density from below pointwise.}
%\end{remark}

\textbf{ACKNOWLEDGEMENTS}
{\color{black}We thank both referees for useful comments that improved the article. M.N. also thanks Elio Marconi for helpful discussions regarding \cite{Mar22}.} M.N.'s research is supported by NSF grant RTG-DMS 1840314. X.Y.'s
research is supported by {\color{black} Simons Collaboration Grant \#947054, together} with a Research Excellence Grant and a CLAS Dean's summer research grant from University of
Connecticut.

\FloatBarrier 
\bibliographystyle{siam}
\bibliography{ref}
\end{document}